\DeclareFontFamily{OT1}{pzc}{}
\DeclareFontShape{OT1}{pzc}{m}{it}%
              {<-> s * [0.900] pzcmi7t}{}
\DeclareMathAlphabet{\mathpzc}{OT1}{pzc}%
                                 {m}{it}
\newtheorem{theorem}{Theorem}[section]}
\newtheorem{definition}[theorem]{Definition}}
\newtheorem{lemma}[theorem]{Lemma}}
\newtheorem{remark}[theorem]{Remark}}
\newtheorem{example}[theorem]{Example}}
\numberwithin{equation}{section}
\newenvironment{proof}{\paragraph{Proof:}}{\hfill$\bullet$}
\begin{document}

\title{Polynomial correspondences expressible as \\ maps of $d$-tuples}
\bigskip
\bigskip

\author{{\sc Shrihari Sridharan}\footnote{{\sl Indian Institute of Science Education and Research Thiruvananthapuram (IISER-TVM), India.} {\tt shrihari@iisertvm.ac.in}},\ \ \ {\sc Subith, G.}\footnote{{\sl Indian Institute of Science Education and Research Thiruvananthapuram (IISER-TVM), India.} {\tt subith21@iisertvm.ac.in}},\ \ \ {\sc Atma Ram Tiwari}\footnote{{\sl Rashtriya Postgraduate College, Veer Bahadur Singh Purvanchal University, Jaunpur, India.} {\tt art15iiser@gmail.com}}}
\bigskip 

\date{\today}
\bigskip 

\maketitle

\begin{abstract}
\noindent 
In this paper, we consider polynomial correspondences $f (x, y)$ in $\mathbb{C}[x, y]$ of degree $d \ge 2$ in both the variables and obtain necessary and sufficient conditions in order that the equation $f (x, y) = 0$ can be expressed as $\Phi (x) = \Psi (y)$, where $\Phi$ and $\Psi$ are fractional degree $d$ rational maps in the Riemann sphere. In the absence of involutions that played a vital role towards characterising quadratic correspondences ($d = 2$), we employ certain elementary ideas from theory of equations and matrices to achieve our results. We further explore certain symmetry conditions on the matrix of coefficients of correspondences that satisfy the above factorisation. We conclude this short note with a few examples. 
\end{abstract}
\bigskip

\begin{tabular}{l c l}
\textbf{Keywords} & : & Polynomial correspondences, \\ & & Maps of $d$-tuples, \\ & & Equivalence classes in the Riemann sphere. \\ \\ 
\textbf{AMS Subject Classifications} & : & 30D05, 30C10, 37F05, 37F10. \\ \\
\end{tabular}
\bigskip  

\thispagestyle{empty}

\newpage 

\section{ Introduction}

The investigation of dynamics of polynomial correspondences that we carry out in this paper, primarily began with the works of Bullett in \cite{Bullett:1988}. Thereafter, Bullett wrote a series of articles with several co-authors, in which the dynamics that arose out of iterating such correspondences were investigated. Various authors have also generalised the study of iteration of various types of correspondences, say, polynomial, rational, holomorphic, meromorphic correspondence \textit{etc}. that are defined on the Riemann sphere or appropriately on certain manifolds. Some notable works among these include Bullett {\it et al} in \cite{Bullett:1991, bf:05, Bullett:1986, Bullett:1994, bp:01}, Hinkkanen and Martin in \cite{hm:96a, hm:96b}, Boyd in \cite{db:99}, Sumi in \cite{hs:00, hs:01, hs:05}, Dinh in \cite{tcd:05:1, tcd:05:2}, Dinh and Sibony in \cite{ds:06}, Bharali and Sridharan in \cite{Bharali:2016, bs:17}. However, all these works focussed on iterating the graph of the correspondence, by viewing it as a relation on the underlying space and allowing the dynamics to grow exponentially along each of the directions of the variables. 

The object of study in \cite{Bullett:1988} is the dynamical system that arises by iterating a certain relation on $ \widehat{\mathbb{C}}$, the Riemann sphere meaning the complex plane along with the point at infinity. This relation is the zero set of a polynomial $f \in \mathbb{C}[x, y]$ satisfying the following conditions:
\begin{enumerate}
\item $f(x, \cdot)$ and $f(\cdot, y)$ are generically quadratic; 
\item Suppose for notational convenience and purposes of being explicit, we denote the domains of the first and the second variable in $f(x, y)$ by $\widehat{\mathbb{C}}_{x}$ and $\widehat{\mathbb{C}}_{y}$ respectively. Let $\Gamma_{f}$ denote the biprojective completion of the set $\left\{ f = 0 \right\}$ in $\widehat{\mathbb{C}}_{x} \times \widehat{\mathbb{C}}_{y}$. Then no irreducible component of $\Gamma_{f}$ is of the form $\left\{ a \right\} \times \widehat{\mathbb{C}}_{y}$ or $\widehat{\mathbb{C}}_{x} \times \left\{ a \right\}$ for any $a \in \widehat{\mathbb{C}}$. 
\end{enumerate}

Observe that this can be well explained by considering a quartic correspondence in homogeneous coordinates $\left( [\xi : x], [\zeta : y] \right) \in \widehat{\mathbb{C}}_{x} \times \widehat{\mathbb{C}}_{y}$, where we use $\xi$ and $\zeta $ to homogenise each of the monomials, given by:  
\begin{eqnarray} 
\label{Homogenise,polynomial,for,d=2}
F \left( [\xi : x], [\zeta : y] \right) & := & A_{(2,\, 2)} x^{2} y^{2}\; +\; A_{(2,\, 1)} x^{2} y \zeta\; +\; A_{(1,\, 2)} x y^{2} \xi\; +\; A_{(2,\, 0)} x^{2} \zeta^{2}\; +\; A_{(0,\, 2)} y^{2} \xi^{2} \nonumber \\ 
& &  +\; A_{(1,\, 1)} x y \xi \zeta\; +\; A_{(1,\, 0)} x \xi \zeta^{2}\; +\; A_{(0,\, 1)} y \xi^{2} \zeta\; +\; A_{(0,\, 0)} \xi^{2} \zeta^{2}, 
\end{eqnarray}
where $A_{(i,\, j)} \in \mathbb{C}$. Then, $F([1 : x], [1 : y])$ yields the polynomial correspondence $f \in \mathbb{C}[x, y]$ that satisfies the above-mentioned two conditions. Suppose $\Pi_{x}$ and $\Pi_{y}$ are the projection maps of $\Gamma_{f}$ onto $\widehat{\mathbb{C}}_{x}$ and $\widehat{\mathbb{C}}_{y}$ respectively, then the conditions written above only imply that
\[ \widehat{\mathbb{C}}_{x} \ni x_{0} \longmapsto \Pi_{y} \left( \Pi_{x}^{-1} \left\{ x_{0} \right\} \cap \Gamma_{f} \right)\ \ \text{and}\ \ \widehat{\mathbb{C}}_{y} \ni y_{0} \longmapsto \Pi_{x} \left( \Pi_{y}^{-1} \left\{ y_{0} \right\} \cap \Gamma_{f} \right), \] 
are set-valued maps and have cardinality two for generic points $x_{0} \in \widehat{\mathbb{C}}_{x}$ and $y_{0} \in \widehat{\mathbb{C}}_{y}$. For such correspondences, Bullett, in \cite{Bullett:1988}, defined the following.

\begin{definition} 
\label{mop}
Let $x_{1} \in \widehat{\mathbb{C}}_{x}$. Then, $\Pi_{y} \left( \Pi_{x}^{-1} \left\{ x_{1} \right\} \cap \Gamma_{f} \right) = \left\{ y_{1}, y_{2} \right\}$, say. Further, $\Pi_{x} \left( \Pi_{y}^{-1} \left\{ y_{j} \right\} \cap \Gamma_{f} \right) = \left\{ x_{1}, x_{2} (j) \right\}$. Suppose $x_{2} (1) \equiv x_{2} (2)$ and the same is true for all initial points $x_{1} \in \widehat{\mathbb{C}}_{x}$. Moreover, we have an analogous situation even when we start with any $y_{1} \in \widehat{\mathbb{C}}_{y}$, then we call $f$ to be a \emph{map of pairs}.  
\end{definition} 

Our aim in this paper is to consider a polynomial correspondence in $\mathbb{C} [x, y]$ of degree $d \ge 2$, along both the directions of the correspondence and investigate for characterising results for what one may call a map of $d$-tuples. A precise definition of this term, that is the central object of this paper, is given in Section \eqref{two}. We also state the main theorems of this paper, namely Theorems \eqref{Bullett,separation,for,d}, \eqref{Bullett,for,d}, \eqref{symm}, \eqref{timepres} and \eqref{timerev}, in the same section. In Section \eqref{pfsec1}, we focus on the first two main theorems and prove the same. After going through two lemmas in Section \eqref{easylemmas} that is useful in the sequel, we focus our attention on proving the remaining three theorems in Section \eqref{pfsec2}. We conclude the paper with a few examples that illustrate the various theorems of this paper, in Section \eqref{exsec}. 

\section{Maps of $d$-tuples and the Main results} 
\label{two} 

We begin this section by defining a degree $d$ polynomial correspondence denoted by $f_{d} \in \mathbb{C} [x, y]$ that satisfies the following conditions:
\begin{enumerate}
\item $f_{d} (x, \cdot)$ and $f_{d} (\cdot, y)$ are generically $d$-valued; 
\item The biprojective completion of the set $\left\{ f_{d} = 0 \right\}$ in $\widehat{\mathbb{C}}_{x} \times \widehat{\mathbb{C}}_{y}$ has no irreducible component of the form $\left\{ a \right\} \times \widehat{\mathbb{C}}_{y}$ or $\widehat{\mathbb{C}}_{x} \times \left\{ a \right\}$ for any $a \in \widehat{\mathbb{C}}$. 
\end{enumerate}

Observe that such a polynomial correspondence is then given by 
\begin{eqnarray} 
\label{fdnozeta} 
f_{d}(x, y) & := & A_{(d,\, d)} x^{d} y^{d}\; +\; A_{(d,\, d - 1)} x^{d} y^{d - 1}\; +\; \cdots\; +\; A_{(d,\, 1)} x^{d} y\; +\; A_{(d,\, 0)} x^{d} \nonumber \\ 
& & +\; A_{(d - 1,\, d)} x^{d - 1} y^{d}\; +\; \cdots\; +\; A_{(d - 1,\, 0)} x^{d - 1} \nonumber \\ 
& & +\; \cdots \nonumber \\ 
& & +\; A_{(1,\, d)} x y^{d}\; +\; \cdots\; +\; A_{(1,\, 0)} x \nonumber \\ 
& & +\; A_{(0,\, d)} y^{d}\; +\; \cdots\; +\; A_{(0,\, 0)}.  
\end{eqnarray} 

Observe that for any generic $x_{0} \in \widehat{\mathbb{C}}_{x}$ and $y_{0} \in \widehat{\mathbb{C}}_{y},\ f_{d} \in \mathbb{C}[x, y]$ satisfies the following:
\[ \widehat{\mathbb{C}}_{x} \ni x_{0} \longmapsto \Pi_{y} \left( \Pi_{x}^{-1} \left\{ x_{0} \right\} \cap \Gamma_{f_{d}} \right)\ \ \text{and}\ \ \widehat{\mathbb{C}}_{y} \ni y_{0} \longmapsto \Pi_{x} \left( \Pi_{y}^{-1} \left\{ y_{0} \right\} \cap \Gamma_{f_{d}} \right) \] 
are set-valued maps with sets of cardinality $d$. Analogous as earlier, $\Gamma_{f_{d}}$ represents the biprojective completion of the set $\left\{ f_{d} = 0 \right\}$ in $\widehat{\mathbb{C}}_{x} \times \widehat{\mathbb{C}}_{y}$. 

\begin{definition} 
\label{modtuple} 
For each $x_{1} \in \widehat{\mathbb{C}}_{x}$, let $\left\{ y_{j}(x_{1}) \right\}_{1\, \le\, j\, \le\, d}$ denote the points in $\Pi_{y} \left( \Pi_{x}^{-1} \left\{ x_{1} \right\} \cap \Gamma_{f_{d}} \right)$ repeated according to the intersection multiplicities of $\Pi_{x}^{-1} \left\{ x_{1} \right\} \cap \Gamma_{f_{d}}$ at its intersection points (equivalently: according to the multiplicity of the zeros of $f_{d} (x_{1}, \cdot)$ when $x_{1} \ne \infty$) and let 
\[ \Big\{ x_{1},\; x_{2} (y_{j} (x_{1})),\; \cdots,\; x_{d} (y_{j} (x_{1})) \Big\},\ \ \ j = 1, \cdots, d, \] 
denote the points in $\Pi_{x} \left( \Pi_{y}^{-1} \left\{ y_{j} (x_{1}) \right\} \cap \Gamma_{f_{d}} \right)$ repeated according to intersection multiplicity. For each $y_{1} \in \widehat{\mathbb{C}}_{y}$, considering $\Pi_{x} \left( \Pi_{y}^{-1} \left\{ y_{1} \right\} \cap \Gamma_{f_{d}} \right)$, let 
\[ \Big\{ y_{1},\; y_{2} (x_{j} (y_{1})),\; \cdots,\; y_{d} (x_{j} (y_{1})) \Big\},\ \ \ j = 1, \cdots, d \] 
be constructed as above \emph{mutatis mutandis}. We call $f_{d}$ a map of $d$-tuples, which we denote by 
\[ \big( x_{1}, x_{2}, \cdots, x_{d} \big)_{\widehat{\mathbb{C}}_{x}}\ \ \mathrel{\mathop{\rightleftarrows}^{f_{d}}}\ \ \big( y_{1}, y_{2}, \cdots, y_{d} \big)_{\widehat{\mathbb{C}}_{y}} \] 
if for each $x_{1} \in \widehat{\mathbb{C}}_{x}$, the lists $\big\{ x_{1}, x_{2} (y_{j} (x_{1})), \cdots, x_{d} (y_{j} (x_{1})) \big\}$ for $j = 1, \cdots, d$ coincide and for each $y_{1} \in \widehat{\mathbb{C}}_{y}$, the lists $\big\{ y_{1}, y_{2} (x_{j} (y_{1})), \cdots, y_{d} (x_{j} (y_{1})) \big\}$ for $j = 1, \cdots, d$ coincide. 
\end{definition} 

\begin{remark} 
\label{rmodtuple} 
For ease of notations and convenience, we assume that the list of points $x_{i}$'s and $y_{j}$'s , as mentioned in the above definition are ordered by non-decreasing modulus, followed by the order of non-decreasing arguments in the interval $[0, 2\pi)$. 
\end{remark}

We now state the main theorems of this paper. 
 
\begin{theorem} 
\label{Bullett,separation,for,d} 
Suppose $f_{d} (x, y)$, as written in Equation \eqref{fdnozeta}, can not be written as $\displaystyle{f_{d} (x, y) = \big[ g_{k} (x, y) \big]^{\frac{d}{k}}}$ where $k | d$ and $k \ne d$ with $g_{k} \in \mathbb{C}[x, y]$ being a map of $k$-tuples. Then, the correspondence $f_{d}$ is a map of $d$-tuples if and only if the variables $x$ and $y$ can be separated in the equation $f_{d} (x, y)\ =\ 0$, {\it i.e.}, the equation $f_{d} (x, y)\ =\ 0$ can be rewritten as $\Phi (x)\ =\ \Psi (y)$, where $\Phi$ and $\Psi$ are fractional degree $d$ functions, as given below:  
\begin{equation} 
\label{phipsi} 
\Phi (x)\ =\ \frac{\kappa_{d} x^{d} + \cdots + \kappa_{1} x + \kappa_{0}}{\lambda_{d} x^{d} + \cdots + \lambda_{1} x + \lambda_{0}} \quad \quad \text{and} \quad \quad \Psi (y)\ =\ \frac{\mu_{d} y^{d} + \cdots + \mu_{1} y + \mu_{0}}{\nu_{d} y^{d} + \cdots + \nu_{1} y + \nu_{0}}, 
\end{equation}  
with complex coefficients.  
\end{theorem}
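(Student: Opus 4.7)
The implication from separation to map-of-$d$-tuples is formal. Assume $f_d(x, y) = 0$ can be rewritten as $\Phi(x) = \Psi(y)$ with $\Phi, \Psi$ of fractional degree $d$. For a generic $x_1 \in \widehat{\mathbb{C}}_x$, the set $\Pi_y\bigl(\Pi_x^{-1}\{x_1\} \cap \Gamma_{f_d}\bigr) = \Psi^{-1}(\Phi(x_1))$ is a $d$-element set $\{y_1, \ldots, y_d\}$, and for each $y_j$ the set $\Pi_x\bigl(\Pi_y^{-1}\{y_j\} \cap \Gamma_{f_d}\bigr) = \Phi^{-1}(\Psi(y_j)) = \Phi^{-1}(\Phi(x_1))$ is independent of $j$. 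This common set provides the desired $d$-tuple $\{x_1, x_2, \ldots, x_d\}$; the same reasoning with $x$ and $y$ interchanged completes the direction.

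\textbf{Forward direction — construction of $\Phi$.} Assume $f_d$ is a map of $d$-tuples. Viewing $f_d(x, y)$ as a polynomial of degree $d$ in $y$, its leading coefficient is $A_{(\cdot, d)}(x) := \sum_{i=0}^d A_{(i, d)} x^i$, and by Vieta's formulas the elementary symmetric functions of the roots $y_1(x), \ldots, y_d(x)$ are
\[ e_k(x)\; =\; (-1)^k \frac{\sum_{i=0}^d A_{(i,\, d-k)}\, x^i}{A_{(\cdot,\, d)}(x)}, \qquad k = 1, \ldots, d, \]
each a rational function of $x$ with numerator and denominator of degree at most $d$. The map-of-$d$-tuples hypothesis forces the $y$-multisets attached to any two $x$'s in the same $x$-class to coincide, so every $e_k$ is constant on $x$-classes. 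If all $e_k$ were constant, $f_d$ would factor as $A_{(\cdot, d)}(x) \cdot h(y)$ for some $h \in \mathbb{C}[y]$ of degree $d$, violating the non-degeneracy condition (2). Pick $k$ with $e_k$ non-constant and set $\Phi := e_k$.

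\textbf{The crux of the argument} is showing that $\Phi$ has fractional degree exactly $d$, so that its fibers are in bijection with the $x$-classes. This is where the hypothesis forbidding $f_d = [g_k]^{d/k}$ is decisive: without it, the zero set of $f_d(\cdot, y_0)$ could collapse to $k < d$ distinct points for every $y_0$, forcing $\Phi$ to have lower degree. Under the hypothesis, for generic $x$ the roots $y_1(x), \ldots, y_d(x)$ are distinct and the associated $x$-classes have $d$ distinct elements. A generic fiber $\Phi^{-1}(c)$ is then a union of $x$-classes of size $d$, so $\deg \Phi$ is a positive multiple of $d$; combined with $\deg \Phi \leq d$ this yields $\deg \Phi = d$, with each fiber equal to a single $x$-class. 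I expect this step to be the main obstacle, since it requires tracking how the power-factorisation hypothesis actually precludes generic collapse of the roots.

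\textbf{Construction of $\Psi$ and verification.} For each $y_0$, let $\{x_1(y_0), \ldots, x_d(y_0)\}$ denote the roots of $f_d(\cdot, y_0)$; by the map-of-$d$-tuples condition this is an entire $x$-class, so $\Phi$ takes a single value on it, which I call $\Psi(y_0)$. Writing $\Phi = P/Q$ with $\gcd(P, Q) = 1$ and $\max(\deg P, \deg Q) = d$, the polynomial $P(x) - \Psi(y_0) Q(x)$ in $x$ has the same $d$ roots as $f_d(x, y_0)$, hence is proportional to $f_d(x, y_0)$. Matching the coefficients of $x^d$ and $x^0$ gives two linear relations from which $\Psi(y_0)$ solves as a ratio of polynomials of degree at most $d$ in $y_0$, and applying the class-size argument symmetrically to $y$-classes yields $\deg \Psi = d$. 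Finally, $\Phi(x) = \Psi(y)$ is equivalent to $y \in \Psi^{-1}(\Phi(x))$, which coincides with the $y$-class of $x$, that is, with $\{y : f_d(x, y) = 0\}$, giving the required equivalence.
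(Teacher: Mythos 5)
Your overall strategy is the paper's: partition $\widehat{\mathbb{C}}_x$ and $\widehat{\mathbb{C}}_y$ into classes of points sharing the same root multiset, use Vieta's formulas to manufacture a rational function of degree at most $d$ in each variable that is constant on classes, invoke the non-degeneracy condition to rule out all such functions being constant, and get the converse by counting fiber cardinalities of $\Phi$ and $\Psi$. Two points of execution differ, both in your favour. First, the paper fixes the sum of roots, i.e.\ $\phi = P_{d-1}/P_d$, after declaring ``without loss of generality'' that $P_{d-1}$ is not proportional to $P_d$; since the sum-of-roots formula is tied specifically to $P_{d-1}$, that reduction is not literally a WLOG, and your choice of an arbitrary non-constant elementary symmetric function $e_k$ is the correct repair. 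Second, the paper builds $\psi = Q_{d-1}/Q_d$ symmetrically and then inserts an unexplained M\"obius map $M$ so that $f_d(x,y)=0$ becomes $M\circ\phi(x)=\psi(y)$; your elimination argument (proportionality of $P(x)-\Psi(y_0)Q(x)$ with $f_d(x,y_0)$ and solving the resulting linear relations for $\Psi(y_0)$) produces $\Psi$ directly and dispenses with that step, at the cost of a couple of genericity checks (a coefficient index for which the eliminant does not vanish identically, and $\Psi(y_0)\ne\kappa_d/\lambda_d$ generically).

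The one place where your write-up is not yet a proof is exactly the step you flag: that under the hypothesis excluding $f_d=[g_k]^{d/k}$ a generic class consists of $d$ distinct points, so that $\deg\Phi$ is forced up to $d$. The paper does not prove this either --- it simply asserts that the class members ``are the $d$ solutions'' of $\phi(x)=\text{const}$ --- but the claim is true and closable in a few lines. Write $f_d=\prod_i u_i^{m_i}$ with the $u_i$ pairwise coprime. For generic $x_1$, a root $y$ of $u_i(x_1,\cdot)$ does not annihilate any other factor, so $x_1$ occurs in the $x$-list of $y$ with multiplicity exactly $m_i$; since the map-of-$d$-tuples condition makes all these $x$-lists equal as multisets, all the $m_i$ coincide, say $m_i=m$ for all $i$. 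Then $f_d=g^m$ with $g=\prod_i u_i$ of bidegree $(d/m,d/m)$, and $g$ inherits the coincidence of lists (divide every multiplicity by $m$), i.e.\ $g$ is a map of $k$-tuples with $k=d/m$; the hypothesis therefore forces $m=1$, so $f_d$ is squarefree, its discriminant in each variable is a nonzero polynomial, and (using condition (2) to exclude horizontal and vertical components) generic classes do have $d$ distinct elements. With that inserted, your degree count $\deg\Phi\in d\,\mathbb{Z}_{>0}\cap[1,d]$ goes through and the rest of your argument is sound.
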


\begin{theorem} 
\label{Bullett,for,d}
The correspondence $f_{d}$, as written in Equation \eqref{fdnozeta} is a map of $d$-tuples on the Riemann sphere if and only if one of the following conditions is true: 
\begin{enumerate} 
\item $f_{d} (x, y)$ is of the form $\displaystyle{f_{d} (x, y) = \big[ g_{k} (x, y) \big]^{\frac{d}{k}}}$ where $k | d$ and $k \ne d$ with $g_{k} \in \mathbb{C}[x, y]$ being a map of $k$-tuples. 
\item The matrix of coefficients of $f_{d}$ has rank $2$, {\it i.e.},  
\begin{equation} 
\label{Main,matrix,for,d}
{\rm rank} \begin{pmatrix} 
A_{(d,\, d)} & A_{(d,\, d - 1)} & \cdots & A_{(d,\, 0)} \\ 
A_{(d - 1,\, d)} & A_{(d - 1,\, d - 1)} & \cdots  & A_{(d - 1,\, 0)} \\ 
\vdots & \vdots & & \vdots \\ 
A_{(0,\, d)} & A_{(0,\, d - 1)} & \cdots & A_{(0,\, 0)} 
\end{pmatrix}_{(d + 1)\, \times\, (d + 1)}\ \ =\ \ 2. 
\end{equation}
\end{enumerate} 
\end{theorem}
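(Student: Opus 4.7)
The plan is to deduce Theorem~\ref{Bullett,for,d} from Theorem~\ref{Bullett,separation,for,d} by showing that the rank-$2$ condition on the coefficient matrix is precisely the matrix-theoretic reformulation of the separation $\Phi(x)=\Psi(y)$ of \eqref{phipsi}. Once this linear-algebraic equivalence is established, both directions of the theorem follow from a clean case analysis according to whether or not $f_{d}=g_{k}^{d/k}$ for some proper divisor $k$ of $d$.

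For the key equivalence, suppose first that the separation $\Phi(x)=\Psi(y)$ holds. Clearing denominators expresses $f_{d}$ as
\[
f_{d}(x,y) \;=\; \Bigl(\sum_{i=0}^{d}\kappa_{i}x^{i}\Bigr)\Bigl(\sum_{j=0}^{d}\nu_{j}y^{j}\Bigr) \;-\; \Bigl(\sum_{i=0}^{d}\lambda_{i}x^{i}\Bigr)\Bigl(\sum_{j=0}^{d}\mu_{j}y^{j}\Bigr),
\]
so $A_{(i,j)}=\kappa_{i}\nu_{j}-\lambda_{i}\mu_{j}$; this is the difference of two rank-$1$ matrices and therefore has rank at most $2$. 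Rank $\leq 1$ is ruled out, because in that case $f_{d}(x,y)=u(x)\,v(y)$, forcing irreducible components of the forbidden form $\{a\}\times\widehat{\mathbb{C}}_{y}$ or $\widehat{\mathbb{C}}_{x}\times\{a\}$ in $\Gamma_{f_{d}}$, contrary to the standing hypothesis on $f_{d}$. Hence the rank is exactly $2$. Conversely, any rank-$2$ coefficient matrix can be decomposed as $A_{(i,j)}=\kappa_{i}\nu_{j}-\lambda_{i}\mu_{j}$ by a standard rank factorisation, and reading this back as polynomials rewrites $f_{d}(x,y)=P(x)S(y)-Q(x)R(y)$, which on the zero set becomes $P(x)/Q(x)=R(y)/S(y)$, i.e.\ the separation form. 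The non-vanishing of the extreme row and column of the matrix, which is enforced by the generically-$d$-valued hypothesis on $f_{d}$, is what guarantees that the resulting $\Phi$ and $\Psi$ are genuinely of fractional degree $d$ rather than of lower degree.

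Combining these, if $f_{d}$ is a map of $d$-tuples and option~(1) fails, Theorem~\ref{Bullett,separation,for,d} yields the separation, whence option~(2) holds by the paragraph above. Conversely, if (2) holds we obtain the separation and Theorem~\ref{Bullett,separation,for,d} returns the map-of-$d$-tuples property (provided (1) fails; if (1) happens to hold simultaneously there is nothing to prove). The remaining scenario, in which only (1) holds, is handled directly: since $\{f_{d}=0\}=\{g_{k}=0\}$ set-theoretically and $g_{k}$ is a map of $k$-tuples, the multiplicity convention of Definition~\ref{modtuple} promotes the $k$-tuple list for $g_{k}$ to the required $d$-tuple list for $f_{d}$ by repeating each entry $d/k$ times, consistently for every choice of base point; the coincidence of the $k$-tuple lists for $g_{k}$ therefore passes to the coincidence of the $d$-tuple lists for $f_{d}$. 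The main obstacle I anticipate is the careful verification that the rank-$2$ decomposition yields numerator and denominator polynomials of full degree $d$ in their respective variables, and the clean exclusion of the rank-$\leq 1$ case from the structural conditions on $f_{d}$ as spelled out at the beginning of Section~\ref{two}.
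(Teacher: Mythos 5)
Your proposal is correct and follows essentially the same route as the paper: both directions are reduced to Theorem \eqref{Bullett,separation,for,d}, the separation $\Phi(x)=\Psi(y)$ is shown equivalent to rank $2$ by writing $A_{(i,\,j)}=\kappa_{i}\nu_{j}-\lambda_{i}\mu_{j}$ (a difference of rank-one matrices, which is the same factorisation the paper writes as a product of two rank-two matrices), rank $\le 1$ is excluded because it would force $f_{d}(x,y)=u(x)v(y)$ against the standing hypothesis, and the case $f_{d}=[g_{k}]^{d/k}$ is handled directly by repeating entries $d/k$ times exactly as in the paper. Your explicit remark that the structural hypotheses guarantee $\Phi$ and $\Psi$ have genuine degree $d$ is a point the paper passes over silently, but it does not change the method.
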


\begin{theorem} 
\label{symm} 
Let $f_{d} = 0$ be a map of $d$-tuples that can not be written as $\displaystyle{f_{d} (x, y) = \big[ g_{k} (x, y) \big]^{\frac{d}{k}}}$ where $k | d$ and $k \ne d$ with $g_{k} \in \mathbb{C}[x, y]$ being a map of $k$-tuples. Then the following statements are equivalent. 
\begin{enumerate} 
\item $f_{d} (x, y) = 0$ iff $f_{d} (y, x) = 0$. 
\item There exists a factorisation of $f_{d} = 0$, as mentioned in Equation \eqref{phipsi} where the complex coefficients $\kappa_{i}, \lambda_{i}, \mu_{i}$ and $\nu_{i};\; 0 \le i \le d$ satisfy $\kappa_{i} \nu_{j} + \lambda_{j} \mu_{i} = \kappa_{j} \nu_{i} + \lambda_{i} \mu_{j}$. 
\item The matrix of coefficients of $f_{d}$ is symmetric. 
\end{enumerate} 
\end{theorem}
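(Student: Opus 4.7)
The plan is to prove the equivalences through the chain (2) $\Leftrightarrow$ (3), (3) $\Rightarrow$ (1), and (1) $\Rightarrow$ (3), with the last being the only non-trivial step.

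For (2) $\Leftrightarrow$ (3), I start by applying Theorem \ref{Bullett,separation,for,d} to the non-power hypothesis to obtain a separation $\Phi(x) = \Psi(y)$ of $f_d$. Clearing denominators, up to a common nonzero scalar,
\[
f_d(x,y) \;=\; \Big(\sum_{i=0}^{d} \kappa_i x^i\Big)\Big(\sum_{j=0}^{d} \nu_j y^j\Big) \;-\; \Big(\sum_{i=0}^{d} \lambda_i x^i\Big)\Big(\sum_{j=0}^{d} \mu_j y^j\Big).
\]
Reading off the coefficient of $x^i y^j$ gives $A_{(i,j)} = \kappa_i\nu_j - \lambda_i\mu_j$, so the matrix-symmetry condition $A_{(i,j)} = A_{(j,i)}$ becomes $\kappa_i\nu_j - \lambda_i\mu_j = \kappa_j\nu_i - \lambda_j\mu_i$, which after transposing the negative terms is exactly the identity $\kappa_i\nu_j + \lambda_j\mu_i = \kappa_j\nu_i + \lambda_i\mu_j$ of (2). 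This establishes the equivalence with no further work.

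The implication (3) $\Rightarrow$ (1) is then immediate: $A_{(i,j)} = A_{(j,i)}$ means $f_d(x,y) = f_d(y,x)$ identically as polynomials, and in particular their zero loci agree.

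For the substantive step (1) $\Rightarrow$ (3), I introduce the swapped polynomial $\widetilde{f}(x,y) := f_d(y,x)$, whose coefficient matrix is the transpose of that of $f_d$; it is therefore of rank $2$ by Theorem \ref{Bullett,for,d}, so $\widetilde{f}$ is again a map of $d$-tuples, and shares its zero locus with $f_d$ by hypothesis. The target is $\widetilde{f} = c\,f_d$ for some $c \in \mathbb{C}^{\ast}$. I would derive this by invoking the essentially-unique separation given by Theorem \ref{Bullett,separation,for,d} (unique up to simultaneous post-composition by a Möbius transformation on $\Phi$ and $\Psi$), combined with the elementary lemmas to appear in Section \ref{easylemmas}; together these force any two such factorisations of a common zero locus to produce proportional polynomials. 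Iterating the swap then gives $c^{2} = 1$, and the symmetry of the coefficient matrix corresponds precisely to $c = +1$. The main obstacle is excluding the antisymmetric branch $c = -1$: this would force $(x-y) \mid f_d$, and one must then use the non-power hypothesis $f_d \neq [g_k]^{d/k}$ in conjunction with the rank-two structure from \eqref{Main,matrix,for,d} and the map-of-$d$-tuples property to show that the resulting factorisation is incompatible with these constraints. I expect this exclusion, rather than the uniqueness-up-to-scalar argument preceding it, to be the delicate part of the proof.
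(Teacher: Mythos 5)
Your reductions $(2)\Leftrightarrow(3)$ (via $A_{(i,\,j)}=\kappa_{i}\nu_{j}-\lambda_{i}\mu_{j}$ after clearing denominators in Equation \eqref{phipsi}) and $(3)\Rightarrow(1)$ are correct and agree with what the paper leaves implicit. The genuine gap is in $(1)\Rightarrow(3)$, and it sits exactly where you flag it, in two places. First, the claim that a common zero locus forces $f_{d}(y,x)=c\,f_{d}(x,y)$ is asserted rather than proved: equal zero loci only identify radicals, and since the swap permutes the irreducible factors of $f_{d}$ you must still rule out a rearrangement of multiplicities (schematically $g^{2}h$ versus $h^{2}g$ with $h(x,y)=g(y,x)$); neither the Möbius-uniqueness of the separation in Theorem \eqref{Bullett,separation,for,d} nor Lemma \eqref{zbyw} gives this without further work. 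Second, and decisively, the antisymmetric branch $c=-1$ cannot be excluded from the hypotheses you are allowed to use: take $f_{d}(x,y)=x^{d}-y^{d}$. Its coefficient matrix has rank $2$ (entries $1$ and $-1$ in positions $(d,0)$ and $(0,d)$), so it is a map of $d$-tuples by Theorem \eqref{Bullett,for,d}; it is squarefree, hence not of the form $[g_{k}]^{d/k}$; and its zero locus is swap-invariant, so statement $(1)$ holds --- yet the coefficient matrix is antisymmetric, not symmetric, and no factorisation as in $(2)$ exists. So the exclusion you defer as ``the delicate part'' is not merely delicate; it is impossible from statement $(1)$ together with the non-power and rank-two hypotheses alone, and your strategy cannot close as planned.

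This is also where your route diverges from the paper's. The paper deduces Theorem \eqref{symm} from Lemma \eqref{symmlemma}, whose hypothesis is not statement $(1)$ but the existence of a \emph{distinct} pair of corresponding $d$-tuples $(x_{1},\dots,x_{d})$ and $(y_{1},\dots,y_{d})$ with the swapped incidences; in the proof this produces two \emph{distinct} values $z_{1}\neq z_{2}$ with $\Phi(x_{i})=z_{1}=\Psi(y_{j})$ and $\Phi(y_{j})=z_{2}=\Psi(x_{i})$, and Vieta's formula (sum of roots) applied to $\Phi-z_{k}$ and $\Psi-z_{k}$ then forces $A_{(i,\,j)}=A_{(j,\,i)}$ entry by entry. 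In the antisymmetric examples above every corresponding $x$-tuple coincides with its $y$-tuple and $z_{1}=z_{2}$, so it is precisely this ``distinct pair'' hypothesis --- which statement $(1)$ does not supply --- that kills the $c=-1$ branch. To repair your argument you would need to add (or derive from a strengthened form of $(1)$) the existence of such a distinct pair, and then either run the paper's Vieta computation or complete your proportionality argument, which in any case still needs the multiplicity justification noted above.
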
 

\begin{theorem} 
\label{timepres} 
Let $f_{d} = 0$ be a map of $d$-tuples that can not be written as $\displaystyle{f_{d} (x, y) = \big[ g_{k} (x, y) \big]^{\frac{d}{k}}}$ where $k | d$ and $k \ne d$ with $g_{k} \in \mathbb{C}[x, y]$ being a map of $k$-tuples. Then the following statements are equivalent. 
\begin{enumerate} 
\item $f_{d} (x, y) = 0$ iff $f_{d} (\overline{x}, \overline{y}) = 0$. 
\item There exists a factorisation of $f_{d} = 0$, as mentioned in Equation \eqref{phipsi} where the coefficients $\kappa_{i}, \lambda_{i}, \mu_{i}$ and $\nu_{i};\; 0 \le i \le d$ are real. 
\item The matrix of coefficients of $f_{d}$ is real (upto multiplication by a constant). 
\end{enumerate} 
\end{theorem}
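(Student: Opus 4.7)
My plan is to prove the cycle $(3) \Rightarrow (2) \Rightarrow (1) \Rightarrow (3)$. Since $f_d$ is a map of $d$-tuples that is not of the form $g_k^{d/k}$, Theorems \ref{Bullett,separation,for,d} and \ref{Bullett,for,d} furnish the factorisation $f_d(x, y) = \alpha \bigl[ \kappa(x) \nu(y) - \lambda(x) \mu(y) \bigr]$ together with the information that the coefficient matrix $M := [A_{(i,\,j)}]$ has rank exactly $2$. The implication $(2) \Rightarrow (1)$ is immediate: real coefficients in $\kappa, \lambda, \mu, \nu$ give $\kappa(\bar{x}) = \overline{\kappa(x)}$ (and similarly for $\lambda, \mu, \nu$), so $f_d(\bar{x}, \bar{y}) = (\alpha / \bar{\alpha}) \, \overline{f_d(x, y)}$, which vanishes precisely when $f_d(x, y)$ does.

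For $(3) \Rightarrow (2)$ the argument is purely linear-algebraic. After absorbing the ambient complex scalar so that $M$ becomes a real rank-$2$ matrix, any real rank-$2$ decomposition (for instance a real singular value decomposition) produces vectors $u_1, u_2, v_1, v_2 \in \mathbb{R}^{d + 1}$ with $M = u_1 v_1^T + u_2 v_2^T$; setting $(\kappa, \lambda) := (u_1, -u_2)$ and $(\mu, \nu) := (v_2, v_1)$ and reading off the corresponding real polynomials recovers $M = \kappa \nu^T - \lambda \mu^T$ with every coefficient real, which is precisely condition (2).

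The substantive step is $(1) \Rightarrow (3)$, and I expect this to be the main obstacle. The polynomial $\overline{f_d(\bar{x}, \bar{y})}$ has coefficient matrix $\bar{M}$, is still rank $2$ of bi-degree $(d, d)$, and by hypothesis (1) shares its vanishing locus with $f_d$; moreover, $\overline{f_d(\bar{x}, \bar{y})}$ is not of the excluded form $g_k^{d/k}$ either (the involution $p \mapsto \overline{p(\bar{x}, \bar{y})}$ preserves the class of such forms), so Theorem \ref{Bullett,separation,for,d} applies to both polynomials. The heart of the argument is then a uniqueness claim: \emph{within the class of rank-$2$ bi-degree $(d, d)$ polynomials obeying the standing hypothesis, such a polynomial is determined by its zero locus up to a nonzero scalar multiple.} Granting this, $\bar{M} = \gamma M$ for some $\gamma \in \mathbb{C}^*$; conjugating once more forces $|\gamma|^2 = 1$, and writing $\gamma = e^{i\theta}$ we conclude that $e^{i\theta / 2} M$ is a real matrix, establishing (3).

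To prove the uniqueness claim, I would argue that the factorisation $\Phi(x) = \Psi(y)$ of Theorem \ref{Bullett,separation,for,d} is itself unique up to the common post-composition $(\Phi, \Psi) \mapsto (T \circ \Phi, T \circ \Psi)$ by a M\"obius transformation $T$, together with independent scalar rescalings of numerator and denominator on each side. This is because two degree-$d$ rational self-maps of $\widehat{\mathbb{C}}$ with coincident fibres must differ by a post-composition with a M\"obius transformation, and equality of the vanishing loci $\{\Phi(x) = \Psi(y)\} = \{\Phi'(x) = \Psi'(y)\}$ forces exactly such agreement of fibres for both pairs $\Phi, \Phi'$ and $\Psi, \Psi'$, with a single common $T$ (one checks at the level of sets that the two ``vertical Möbius'' transformations must coincide). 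A short direct computation then shows that the induced $\mathrm{GL}_2(\mathbb{C})$ action $(\kappa, \lambda;\, \mu, \nu) \mapsto (p\kappa + q\lambda,\, r\kappa + s\lambda;\, p\mu + q\nu,\, r\mu + s\nu)$ scales $\kappa \nu^T - \lambda \mu^T$ by the factor $ps - qr$, while the independent rescalings contribute further scalar factors; taken together these collapse into a single $\mathbb{C}^*$-ambiguity in $M$, which is precisely the scalar uniqueness that feeds the preceding paragraph.
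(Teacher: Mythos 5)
Your proposal is correct and follows the same overall skeleton as the paper: the same cycle of implications, with $(2)\Rightarrow(1)$ by conjugating the factorised equation, $(3)\Rightarrow(2)$ by producing a real rank-two factorisation, and $(1)\Rightarrow(3)$ reduced to showing that the conjugated coefficient matrix is a scalar multiple of the original one. The one place where you genuinely diverge is inside $(1)\Rightarrow(3)$: the paper simply asserts that $f_{d}$ and $\overline{f_{d}}$ ``represent the same polynomial correspondence, meaning one is a constant multiple of the other,'' and then finishes with Lemma \eqref{zbyw} to place all coefficients on a common real ray; you instead justify the proportionality by a uniqueness argument -- equality of zero loci forces equality of the fibre partitions, so the two factorisations $\Phi(x)=\Psi(y)$ of Theorem \eqref{Bullett,separation,for,d} differ by a single common M\"{o}bius post-composition, whose $\mathrm{GL}_2$ action scales $\kappa\nu^{T}-\lambda\mu^{T}$ by its determinant -- and then close with $|\gamma|=1$ and the rotation $e^{i\theta/2}M$ rather than Lemma \eqref{zbyw}. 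Your route therefore supplies a justification for the step the paper leaves implicit (at the cost of the extra uniqueness lemma), while the paper's version is shorter because it takes the proportionality of correspondences with identical $d$-tuple structure for granted; both finishes from $\bar{M}=\gamma M$ to reality of $M$ up to a constant are equivalent.
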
 

\begin{theorem} 
\label{timerev} 
Let $f_{d} = 0$ be a map of $d$-tuples that can not be written as $\displaystyle{f_{d} (x, y) = \big[ g_{k} (x, y) \big]^{\frac{d}{k}}}$ where $k | d$ and $k \ne d$ with $g_{k} \in \mathbb{C}[x, y]$ being a map of $k$-tuples. Then the following statements are equivalent. 
\begin{enumerate} 
\item $f_{d} (x, y) = 0$ iff $f_{d} (\overline{y}, \overline{x}) = 0$. 
\item There exists a factorisation of $f_{d} = 0$, as mentioned in Equation \eqref{phipsi} where the coefficients $\kappa_{i}, \lambda_{i}, \mu_{i}$ and $\nu_{i};\; 0 \le i \le d$ satisfy $\kappa_{i} = \overline{\mu_{i}}$ and $\lambda_{i} = \overline{\nu_{i}}$. 
\item The matrix of coefficients of $f_{d}$ is skew-Hermitian (upto multiplication by a constant). 
\end{enumerate} 
\end{theorem}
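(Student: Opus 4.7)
The plan is to establish the cycle $1 \Leftrightarrow 3 \Leftrightarrow 2$ by exploiting the rank-$2$ factorisation $f_d(x,y) = P(x)S(y) - Q(x)R(y)$ guaranteed by Theorems \ref{Bullett,separation,for,d} and \ref{Bullett,for,d}. Writing $P = \sum_i \kappa_i x^i$, $Q = \sum_i \lambda_i x^i$, $R = \sum_j \mu_j y^j$, $S = \sum_j \nu_j y^j$, one has $A_{(i,j)} = \kappa_i \nu_j - \lambda_i \mu_j$, and the matrix $M$ of (\ref{Main,matrix,for,d}) is related to these by $M_{ij} = A_{(d-i,d-j)}$.

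The implication $2 \Rightarrow 3$ is immediate: substituting $\mu_i = \overline{\kappa_i}$ and $\nu_j = \overline{\lambda_j}$ into the formula for $A_{(i,j)}$ yields $A_{(i,j)} = \kappa_i \overline{\lambda_j} - \lambda_i \overline{\kappa_j}$, whence $\overline{A_{(j,i)}} = -A_{(i,j)}$, which translates via the index swap to $M^* = -M$. For $3 \Rightarrow 2$, after absorbing the scalar ambiguity so that $M$ itself is skew-Hermitian, I would invoke a rank-two normal form to write $M = uv^* - vu^*$ for some $u,v \in \mathbb{C}^{d+1}$. Setting $\kappa_i = u_{d-i}$ and $\lambda_i = v_{d-i}$ then recovers the factorisation $f_d(x,y) = P(x)\overline{Q}(y) - Q(x)\overline{P}(y)$, which in the language of (\ref{phipsi}) corresponds to $\mu_i = \overline{\kappa_i}$ and $\nu_i = \overline{\lambda_i}$.

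For $3 \Rightarrow 1$, introduce $\widetilde{f}_d(x,y) := \overline{f_d(\overline{y},\overline{x})} = \sum_{i,j}\overline{A_{(j,i)}}\, x^i y^j$, whose coefficient matrix is precisely $M^*$. When $M^* = -M$, one has $\widetilde{f}_d \equiv -f_d$ as polynomials, so $(x_0,y_0)$ is a zero of $f_d$ if and only if $(\overline{y_0},\overline{x_0})$ is, which is exactly statement 1.

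The converse direction $1 \Rightarrow 3$ is the most delicate. Under statement 1, $f_d$ and $\widetilde{f}_d$ define the same zero set in $\widehat{\mathbb{C}}_x \times \widehat{\mathbb{C}}_y$. The hypothesis that $f_d$ is not of the form $g_k^{d/k}$, together with the multiplicity convention of Definition \ref{modtuple} and the backward-consistency required of a map of $d$-tuples, prevents $f_d$ from having a repeated irreducible factor: tracing a hypothetical squared factor $h^2 \mid f_d$ through the fibres $\{y_j(x_1)\}$ and the backward lists $\{x_i(y_j(x_1))\}$ forces $f_d$ to be a pure power of a lower-bidegree map of $k$-tuples, contradicting the hypothesis. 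Thus $f_d$ is squarefree, and unique factorisation in $\mathbb{C}[x,y]$ gives $\widetilde{f}_d = c\,f_d$ for some $c \in \mathbb{C}^\times$, i.e., $M^* = cM$; applying the identity twice forces $|c|=1$, and a rescaling $f_d \to \lambda f_d$ with $\overline{\lambda}/\lambda = -1/c$ converts $c$ into $-1$ and yields the desired skew-Hermitian representative. The main obstacle I expect is the squarefree argument, which requires a careful combination of the $d$-tuple multiplicity data with unique factorisation in $\mathbb{C}[x,y]$; the rank-two skew-Hermitian normal form used in $3 \Rightarrow 2$ is the more standard piece of linear algebra.
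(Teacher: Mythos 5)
Your outer implications are fine: $2\Rightarrow 3$ is the direct computation, $3\Rightarrow 1$ via $\widetilde{f}_d(x,y)=\overline{f_d(\overline{y},\overline{x})}$ is correct, and $1\Rightarrow 3$ works once one knows $f_d$ is squarefree; that squarefreeness is in fact easier than you anticipate, since under the non-power hypothesis Theorem \eqref{Bullett,for,d} gives $f_d\doteq P(x)S(y)-Q(x)R(y)$ with no vertical or horizontal components, and a multiple component would force $\Phi'$ (a rational function of $x$ alone) to vanish identically along a component on which $x$ is non-constant, which is impossible. Note also that your route ($1\Leftrightarrow 3$, then $3\Leftrightarrow 2$ by linear algebra) is organised differently from the paper's, which proves $1\Rightarrow 2$ by invoking uniqueness of the separation up to post-composition with a M\"{o}bius map $M$ and then splitting $M=(\overline{N})^{-1}N$.

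The genuine gap is the step you regard as standard: the ``normal form'' $M=uv^{*}-vu^{*}$ does \emph{not} hold for every rank-two skew-Hermitian matrix. For $T=uv^{*}-vu^{*}$ the Hermitian matrix $-iT$ has quadratic form $x^{*}(-iT)x=2\,\mathrm{Im}\big((x^{*}u)\overline{(x^{*}v)}\big)$, hence is indefinite (signature $(1,1)$ on its support); but a rank-two skew-Hermitian matrix may equally be $i$ times a semidefinite Hermitian matrix, e.g. $i(uu^{*}+vv^{*})$, which admits no such representation, and rescaling by a constant cannot repair the sign. This is not a removable technicality of your write-up: taking $U,V$ real, monic of degree $d$ and without common roots, the correspondence $f_d(x,y)=i\big[U(x)U(y)+V(x)V(y)\big]$ has rank-two, skew-Hermitian coefficient matrix, hence is a map of $d$-tuples satisfying statements $(1)$ and $(3)$ and is not a power; yet any factorisation as in statement $(2)$ would make the coefficient matrix proportional to $\kappa\lambda^{*}-\lambda\kappa^{*}$, and comparing the definiteness of $-i(\kappa\lambda^{*}-\lambda\kappa^{*})$ (indefinite) with that of $uu^{*}+vv^{*}$ (semidefinite) rules this out. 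So $3\Rightarrow 2$ cannot be completed along the lines you propose without an additional sign (signature) condition; you should be aware that the paper's own argument slides over exactly the same point, when it asserts $M=(\overline{M})^{-1}$ for the connecting M\"{o}bius map --- projectively one only gets $M\overline{M}=\pm\,\mathrm{Id}$, and only the $+$ case splits as $(\overline{N})^{-1}N$.
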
 

We conclude this section with the following remarks. 

\begin{remark} 
Upon taking $d = 2$ in Theorems \eqref{Bullett,separation,for,d}, \eqref{Bullett,for,d}, \eqref{timepres} and \eqref{timerev}, we obtain Bullett's results found in \cite{Bullett:1988}, as mere corollaries to the respective statements of our theorems.  
\end{remark} 

\begin{remark} 
In the case of a map of pairs, as explained in Definition \eqref{mop}, we urge the readers to observe the presence of involutions $I_{x} : \widehat{\mathbb{C}}_{x} \righttoleftarrow$ and $I_{y} : \widehat{\mathbb{C}}_{y} \righttoleftarrow$ that satisfies $I_{x} (x_{1}) = x_{2}$ and $I_{y} (y_{1}) = y_{2}$. Bullett, in \cite{Bullett:1988} makes an extensive use of these involutions to prove his results. However when $d > 2$ (the case that we deal with), we do not have the luxury of such involutions. Thus, we employ alternate methods to prove our theorems.  
\end{remark} 

\section{Proofs of Theorems \eqref{Bullett,separation,for,d} and \eqref{Bullett,for,d}}
\label{pfsec1} 

In this section of the manuscript, we prove the first two theorems. We first prove Theorem \eqref{Bullett,separation,for,d}. 

\begin{proof}(of theorem \eqref{Bullett,separation,for,d}) We start by supposing $f_{d}$ to be a map of $d$-tuples that can not be written as $\big[ g_{k} (x, y) \big]^{\frac{d}{k}}$ where $k | d$ and $k \ne d$ with $g_{k} \in \mathbb{C}[x, y]$ being a map of $k$-tuples. Then, for any $x_{1}$ and $x_{2}$ in $\widehat{\mathbb{C}}_{x}$, we say  $x_{1} \sim x_{2}$ {\it iff} the list of points in $\Pi_{y} \left( \Pi_{x}^{-1} \{ x_{1} \} \cap \Gamma_{f_{d}} \right)$ and $\Pi_{y} \left( \Pi_{x}^{-1} \{ x_{2} \} \cap \Gamma_{f_{d}} \right)$, repeated according to multiplicity are equal. Analogously, for any $y_{1}$ and $y_{2}$ in $\widehat{\mathbb{C}}_{y}$, we say  $y_{1} \sim y_{2}$ {\it iff} the list of points in $\Pi_{x} \left( \Pi_{y}^{-1} \{ y_{1} \} \cap \Gamma_{f_{d}} \right)$ and $\Pi_{x} \left( \Pi_{y}^{-1} \{ y_{2} \} \cap \Gamma_{f_{d}} \right)$, repeated according to multiplicity are equal.  It is easy to verify that $\sim$ is an equivalence relation on $\widehat{\mathbb{C}}_{x}$, as well as on $\widehat{\mathbb{C}}_{y}$. Denote by $\left[ x_{1} \right]$, the equivalence class of elements in $\widehat{\mathbb{C}}_{x}$ that are related to $x_{1}$ and by $\left[ y_{1} \right]$, the equivalence class of elements in $\widehat{\mathbb{C}}_{y}$ that are related to $y_{1}$. Observe that every equivalence class in $\widehat{\mathbb{C}}_{x}$ and $\widehat{\mathbb{C}}_{y}$ contains $d$ points, counting multiplicity. 

\noindent 
Suppose we construct a map $\phi : \widehat{\mathbb{C}}_{x} \longrightarrow \widehat{\mathbb{C}}$ such that $\phi (x_{1}) = \phi (x_{2})$ whenever $x_{1} \sim x_{2}$ and a map $\psi : \widehat{\mathbb{C}}_{y} \longrightarrow \widehat{\mathbb{C}}$ such that $\psi (y_{1}) = \psi (y_{2})$ whenever $y_{1} \sim y_{2}$. If such a construction is possible, then $\phi$ assigns the same value to all the $d$ points (counting multiplicity) that are related to each other in $\widehat{\mathbb{C}}_{x}$ and hence, must be a rational map of degree $d$. Similarly, $\psi$ must also be a rational map of degree $d$. We can then employ an automorphism, namely a M\"{o}bius map, to move within $\widehat{\mathbb{C}}$. It is essential to understand that we employ a M\"{o}bius map to move within $\widehat{\mathbb{C}}$, since $f_{d}$ is only a map of $d$-tuples and we should not accrue any extra degrees at this stage. Thus, the map that we employ needs to be linear. Further, this map should also be an analytic bijection. In other words, we concern ourselves to factorising $f_{d}$ as: 
\begin{equation} 
\label{likeBullett} 
\widehat{\mathbb{C}}_{x}\ \ \overset{\phi}\longrightarrow\ \ \widehat{\mathbb{C}}\ \ \overset{M}\longrightarrow\ \ \widehat{\mathbb{C}}\ \ \overset{\psi}\longleftarrow\ \  \widehat{\mathbb{C}}_{y}. 
\end{equation} 
Then, the equation $f_{d} (x, y) = 0$ can be re-written as $M \circ \phi (x) = \psi (y)$. Towards that end, we re-write the equation $f_{d} (x, y) = 0$ as 
\begin{eqnarray*} 
P_{d} (x) y^{d} + P_{d - 1} (x) y^{d - 1} + \cdots + P_{1} (x) y + P_{0} (x) & = & 0\ \ \ \text{and} \\ 
Q_{d} (y) x^{d} + Q_{d - 1} (y) x^{d - 1} + \cdots + Q_{1} (y) x + Q_{0} (x) & = & 0,  
\end{eqnarray*} 
where each $P_{j}$ and $Q_{j}$ for $0 \le j \le d$ is a polynomial of degree at most $d$. Also at least one of the polynomials $P_{j}$ for $1 \le j \le d$ and one of the polynomials $Q_{j}$ for $1 \le j \le d$ is of degree $d$. Without loss of generality, let ${\rm deg} (P_{d}) = d$. Further, one can not have all the polynomials $P_{j}$ for $0 \le j \le d - 1$ to be some constant multiple of $P_{d}$ simultaneously. Thus, without loss of generality, let $P_{d - 1}$ be the polynomial that can not be written as $c P_{d}$ for any $c \in \mathbb{C}$. 

Since $f_{d}$ is a map of $d$-tuples, consider a pair of $d$-tuple $\big( a_{1}, a_{2}, \cdots, a_{d} \big)_{\widehat{\mathbb{C}}_{x}} \mathrel{\mathop{\rightleftarrows}^{f_{d}}} \big( b_{1}, b_{2}, \cdots, b_{d} \big)_{\widehat{\mathbb{C}}_{y}}$. Then, observe that the points $\left\{ a_{1}, a_{2}, \cdots, a_{d} \right\}$ are the $d$ solutions to the equation $\displaystyle{\dfrac{P_{d - 1}}{P_{d}} (x) = - b_{1} - b_{2} - \cdots - b_{d}}$. Similarly, with analogous assumptions that we considered without loss of generalities, we obtain $\left\{ b_{1}, b_{2}, \cdots, b_{d} \right\}$ to be the $d$ solutions to the equation $\displaystyle{\dfrac{Q_{d - 1}}{Q_{d}} (y) = - a_{1} - a_{2} - \cdots - a_{d}}$. In fact, for any pair of $d$-tuples that satisfy $\big( x_{1}, x_{2}, \cdots, x_{d} \big)_{\widehat{\mathbb{C}}_{x}} \mathrel{\mathop{\rightleftarrows}^{f_{d}}} \big( y_{1}, y_{2}, \cdots, y_{d} \big)_{\widehat{\mathbb{C}}_{y}}$, it is clear that the rational maps $\displaystyle{\dfrac{P_{d - 1}}{P_{d}}}$ and $\displaystyle{\dfrac{Q_{d - 1}}{Q_{d}}}$ (under the assumptions, that we made without loss of generalities) provide the maps that takes $d$ many points in $\widehat{\mathbb{C}}_{x}$ to a single point in $\widehat{\mathbb{C}}$ and $d$ many points in $\widehat{\mathbb{C}}_{y}$ to a single point in $\widehat{\mathbb{C}}$. Thus, by construction $\phi$ and $\psi$ are rational maps of degree $d$ in the variables $x$ and $y$ respectively. Now, by considering an appropriate M\"{o}bius map, say $M$ to move within $\widehat{\mathbb{C}}$ and defining $\Phi = M \circ \phi$ and $\Psi \equiv \psi$, we obtain 
\[ \Phi (x)\ \ =\ \ \frac{\kappa_{d} x^{d}\, +\, \cdots\, +\, \kappa_{1} x\, +\, \kappa_{0}}{\lambda_{d} x^{d}\, +\, \cdots\, +\, \lambda_{1} x\, +\, \lambda_{0}} \quad \quad \text{and} \quad \quad \Psi (y)\ \ =\ \ \frac{\mu_{d} y^{d}\, +\, \cdots\, +\, \mu_{1} y\, +\, \mu_{0}}{\nu_{d} y^{d}\, +\, \cdots\, +\, \nu_{1} y\, +\, \nu_{0}}, \] 
where $\kappa_{i}, \lambda_{i}, \mu_{i}, \nu_{i} \in \mathbb{C}$ for $0 \le i \le d$. 

Conversely, suppose the equation $f_{d} (x, y) = 0$ can be re-written by separating the variables as $\Phi (x) = \Psi (y)$ where $\Phi$ and $\Psi$ are fractional degree $d$ functions. In order to prove that $f_{d}$ is a map of $d$-tuples, we observe that for any complex value $z_{0} \in \widehat{\mathbb{C}}$, both the equations $\Phi (x) = z_{0}$ and $\Psi (y) = z_{0}$ have solution sets, each of cardinality $d$ counting multiplicity, thus making $f_{d}$, a map of $d$-tuples. 
\end{proof} 

As one may observe, the factorisation of $f_{d} (x, y) = 0$ as $\Phi (x) = \Psi (y)$ is unique only upto composition with a M\"{o}bius map. We now prove the first statement in Theorem \eqref{Bullett,for,d}. 

\begin{proof}(of theorem \eqref{Bullett,for,d} (1)) Consider the correspondence $f_{d} \in \mathbb{C}[x, y]$ to be a map of $d$-tuples. We first prove that $f_{d}$ can not be factorised as $g_{k} h_{d - k}$, where $g_{k}$ and $h_{d - k}$ are maps of $k$-tuples and $(d - k)$-tuples respectively. In order to do so, we assume to the contrary. Since $g_{k}$ and $h_{d - k}$ are maps of $k$-tuples and $(d - k)$-tuples respectively, one may write  
\begin{eqnarray*} 
\big( x_{1}, x_{2}, \cdots, x_{k} \big)_{\widehat{\mathbb{C}}_{x}} & \mathrel{\mathop{\rightleftarrows}^{g_{k}}} & \big( y_{1}, y_{2}, \cdots, y_{k} \big)_{\widehat{\mathbb{C}}_{y}} \ \ \ \ \text{and} \\  
\big( x_{k + 1}, x_{k + 2}, \cdots, x_{d} \big)_{\widehat{\mathbb{C}}_{x}} & \mathrel{\mathop{\rightleftarrows}^{h_{d - k}}} & \big( y_{k + 1}, y_{k + 2}, \cdots, y_{d} \big)_{\widehat{\mathbb{C}}_{y}}. 
\end{eqnarray*} 
However, this does not result in the set of all values of $x_{i}$'s and $y_{j}$'s, written in the appropriate order, as mentioned in Remark \eqref{rmodtuple}, corresponding to each other through the map $f_{d}$. This implies that $f_{d}$ can not be factorised as $g_{k} h_{d - k}$. This argument further leads us to conclude that any possible factorisation of $f_{d}$ can not involve more than one factor, meaning $f_{d} = [g_{k}]^{\frac{d}{k}}$, where $k | d$. Conversely, for any divisor $k$ of $d$ with $k \ne d,\ [g_{k}]^{\frac{d}{k}}$ is a map of $d$-tuples, where in every equivalence class $[x]$ determined by the equivalence relation, as mentioned in the proof of Theorem \eqref{Bullett,separation,for,d}, the number of occurrences of each element is a multiple of $d/k$. 
\end{proof} 

In order to prove the second statement of Theorem \eqref{Bullett,for,d}, we make use of Theorem \eqref{Bullett,separation,for,d} 

\begin{proof}(of Theorem \eqref{Bullett,for,d} (2))
Let $f_{d}$ be a map of $d$-tuples. We employ Theorem \eqref{Bullett,separation,for,d} that states that $f_{d} (x, y) = 0$ can be rewritten as $\Phi (x) = \Psi (y)$, where 
\[ \Phi (x)\ \ =\ \ \frac{\kappa_{d} x^{d}\, +\, \cdots\, +\, \kappa_{1} x\, +\, \kappa_{0}}{\lambda_{d} x^{d}\, +\, \cdots\, +\, \lambda_{1} x\, +\, \lambda_{0}} \quad \quad \text{and} \quad \quad \Psi (y)\ \ =\ \ \frac{\mu_{d} y^{d}\, +\, \cdots\, +\, \mu_{1} y\, +\, \mu_{0}}{\nu_{d} y^{d}\, +\, \cdots\, +\, \nu_{1} y\, +\, \nu_{0}}, \] 
with $\kappa_{i}, \lambda_{i}, \mu_{i}, \nu_{i} \in \mathbb{C}$ for $0 \le i \le d$. Thus, in this case the correspondence can be re-written as 
\[ \left( \kappa_{d} \nu_{d} - \lambda_{d} \mu_{d} \right) x^{d} y^{d}\; +\; \left( \kappa_{d} \nu_{d - 1} - \lambda_{d} \mu_{d - 1} \right) x^{d} y^{d - 1}\; +\; \cdots\; +\; \left( \kappa_{0} \nu_{0} - \lambda_{0} \mu_{0} \right)\ \ =\ \ 0. \] 
Thus, the appropriate matrix of coefficients is given by
\[ \begin{pmatrix} \kappa_{d} \nu_{d} - \lambda_{d} \mu_{d} & \kappa_{d} \nu_{d - 1} - \lambda_{d} \mu_{d - 1} & \cdots & \kappa_{d} \nu_{0} - \lambda_{d} \mu_{0} \\ 
\kappa_{d - 1} \nu_{d} - \lambda_{d - 1} \mu_{d} & \kappa_{d - 1} \nu_{d - 1} - \lambda_{d - 1} \mu_{d - 1} & \cdots & \kappa_{d - 1} \nu_{0} - \lambda_{d - 1} \mu_{0} \\ 
\vdots & & & \\ 
\kappa_{0} \nu_{d} - \lambda_{0} \mu_{d} & \kappa_{0} \nu_{d - 1} - \lambda_{0} \mu_{d - 1} & \cdots & \kappa_{0} \nu_{0} - \lambda_{0} \mu_{0}, \end{pmatrix} \] 
that can be factorised as 
\[ \begin{pmatrix} \kappa_{d} & 0 & \cdots & 0 & \lambda_{d} \\ 
\kappa_{d - 1} & 0 & \cdots & 0 & \lambda_{d - 1} \\ 
\vdots & & & &\vdots \\ 
\kappa_{0} & 0 & \cdots & 0 & \lambda_{0} \end{pmatrix} \times 
\begin{pmatrix} \nu_{d} & \nu_{d - 1} & \cdots & \nu_{0} \\ 
0 & 0 & \cdots & 0 \\ 
\vdots & & & & \\ 
0 & 0 & \cdots & 0 \\ 
- \mu_{d} & - \mu_{d - 1} & \cdots & - \mu_{0} \end{pmatrix}. \] 
Since both the matrices in the factorisation have their ranks to be two, the product matrix can have rank utmost two. We now prove that the rank of the product matrix is not equal to one, by the method of contradiction. 

Consider the matrix of coefficients given by 
\[ \begin{pmatrix} 
A_{(d,\, d)} & A_{(d,\, d - 1)} & \cdots & A_{(d,\, 0)} \\ 
A_{(d - 1,\, d)} & A_{(d - 1,\, d - 1)} & \cdots  & A_{(d - 1,\, 0)} \\ 
\vdots & \vdots & & \vdots \\ 
A_{(0,\, d)} & A_{(0,\, d - 1)} & \cdots & A_{(0,\, 0)} 
\end{pmatrix}. \] 
Since, by assumption, this matrix has rank $1$, we write the latter $(d - 1)$ columns, (without loss of generality) as a constant multiple of the first column. Then, the equation of the correspondence $f_{d} (x, y)= 0$ can be factorised as $P(x) Q(y) = 0$, where $P$ and $Q$ are polynomials of degree $d$ with complex coefficients. This violates the second condition in the definition of the degree $d$ polynomial correspondence.  

We now suppose that the rank of the coefficient matrix is $2$ and prove that $f_{d}$ is a map of $d$-tuples. Denote the rows of the coefficient matrix by $R_{1}, R_{2}, \cdots, R_{d + 1}$. Without loss of generality, we assume that the rows $R_{j}$ for $1 < j < d + 1$ are spanned by the first row $R_{1}$ and the last one $R_{d + 1}$, {\it i.e.}, $R_{j} = \sigma_{j} R_{1} + \tau_{j} R_{d + 1}$. Thus, one can write $f_{d} (x, y) = 0$ as 
\[ x^{d} R_{1} Y\; +\; x^{d - 1} R_{2} Y\; +\; \cdots\; +\; x R_{d} Y\; +\; R_{d + 1} Y\ \ =\ \ 0,\ \ \ \ \text{where}\ \ \ Y^{t}\ =\ \begin{pmatrix} y^{d} & y^{d - 1} & \cdots & y & 1 \end{pmatrix}. \] 
This can be simplified as 
\[ \left[ \left( x^{d} + \sigma_{2} x^{d - 1} + \cdots + \sigma_{d} x \right) R_{1}\ +\ \left( \tau_{2} x^{d - 1} + \cdots + \tau_{d} x + 1 \right) R_{d + 1} \right] Y\ \ =\ \ 0. \] 
It is then obvious that the above equation can be expressed as $\Phi (x) = \Psi (y)$, where $\Phi$ and $\Psi$ are fractional degree $d$ maps. Thus, an appeal to Theorem \eqref{Bullett,separation,for,d} completes the proof. 
\end{proof} 

\section{Two easy Lemmas} 
\label{easylemmas} 

We begin this section with the statement of a lemma from complex numbers, that will come in handy in the proofs of the theorems, later. The proof of the following lemma is elementary. 

\begin{lemma} 
\label{zbyw}
Let $z,\, w\, \in \mathbb{C}$ satisfying $z \overline{w} = w \overline{z}$. Then, ${\rm Re}(z) {\rm Im}(w) = {\rm Re}(w) {\rm Im}(z)$. 
\end{lemma}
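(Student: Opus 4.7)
The plan is to reduce the hypothesis to a statement about a single complex number being real, and then read off the conclusion from its imaginary part. Observe first that for any $z,w \in \mathbb{C}$ one has $\overline{z\overline{w}} = \overline{z}\, w = w \overline{z}$, so the hypothesis $z\overline{w} = w\overline{z}$ is equivalent to saying that $z\overline{w}$ equals its own conjugate, i.e., $z\overline{w} \in \mathbb{R}$, or equivalently $\operatorname{Im}(z\overline{w}) = 0$.

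The remaining step is a direct computation. Writing $z = a + bi$ and $w = c + di$ with $a,b,c,d \in \mathbb{R}$, I would expand
\[
z\overline{w} \;=\; (a+bi)(c-di) \;=\; (ac+bd) + (bc-ad)i,
\]
so that $\operatorname{Im}(z\overline{w}) = bc - ad$. The condition $\operatorname{Im}(z\overline{w}) = 0$ therefore becomes $ad = bc$, which upon translating back to $a = \operatorname{Re}(z)$, $b = \operatorname{Im}(z)$, $c = \operatorname{Re}(w)$, $d = \operatorname{Im}(w)$ is exactly the stated identity $\operatorname{Re}(z)\operatorname{Im}(w) = \operatorname{Re}(w)\operatorname{Im}(z)$.

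Since the hypothesis and conclusion are both captured by the single scalar equation $bc - ad = 0$, the argument is simply the sequence of identifications above; there is no real obstacle here, and the lemma is as elementary as the authors indicate. The only thing worth flagging for readers is the conceptual observation that $z\overline{w} = w\overline{z}$ is just the statement that $z$ and $w$ are $\mathbb{R}$-linearly dependent (viewing $\mathbb{C}$ as $\mathbb{R}^{2}$), which is the geometric content that will presumably be used later when this lemma is invoked.
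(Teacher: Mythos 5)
Your proof is correct, and it is exactly the elementary computation the authors have in mind (the paper omits the proof entirely, remarking only that it is elementary): $z\overline{w}=w\overline{z}$ forces $\operatorname{Im}(z\overline{w})=0$, which upon expansion is precisely $\operatorname{Re}(z)\operatorname{Im}(w)=\operatorname{Re}(w)\operatorname{Im}(z)$. Your closing remark that the condition expresses $\mathbb{R}$-linear dependence of $z$ and $w$ is a fair gloss and consistent with how the lemma is later used in the proof of Theorem 2.7.
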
 

We now state and prove a Lemma, that will be useful in the proof of Theorem \eqref{symm}. 

\begin{lemma} 
\label{symmlemma}
Let $f_{d} (x, y) = 0$ be a map of $d$-tuples that can be rewritten as $\Phi (x) = \Psi (y)$. Suppose there exists a distinct pair of $d$-tuples namely $\big( x_{1}, \cdots, x_{d} \big)$ and $\big( y_{1}, \cdots, y_{d} \big)$ with $x_{i}, y_{j} \in \widehat{\mathbb{C}}$ arranged as demanded by Remark \eqref{rmodtuple}, such that $f_{d} (x_{i}, y_{j}) = 0$ iff $f_{d} (y_{j}, x_{i}) = 0$ for $1 \le i, j \le d$, then, $f_{d} (x, y) = 0$ can also be rewritten as $\Phi (y) = \Psi (x)$. 
\end{lemma}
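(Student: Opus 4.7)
The plan is to apply Theorem~\ref{Bullett,separation,for,d} to write $f_d(x,y) = 0$ as $\Phi(x) = \Psi(y)$ with $\Phi = \kappa/\lambda$ and $\Psi = \mu/\nu$, and then use the swapping hypothesis on the one corresponding pair of $d$-tuples to pin down the four polynomials $\kappa, \lambda, \mu, \nu$, each up to a common scalar, in terms of
\[ p(z) \;:=\; \prod_{i=1}^{d}(z - x_{i}) \quad \text{and} \quad q(z) \;:=\; \prod_{j=1}^{d}(z - y_{j}). \]
Since $(x_{1}, \ldots, x_{d})$ and $(y_{1}, \ldots, y_{d})$ are a corresponding pair of $d$-tuples for $f_{d}$, there is a common value $c \in \widehat{\mathbb{C}}$ with $\Phi(x_{i}) = c = \Psi(y_{j})$ for every $i, j$. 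The hypothesis $f_{d}(y_{j}, x_{i}) = 0$ translates, via the separation, to $\Phi(y_{j}) = \Psi(x_{i})$ for every pair $(i, j)$; fixing $j$ and varying $i$ (and vice versa) shows that $\Psi(x_{i})$ and $\Phi(y_{j})$ are both independent of the indices, yielding a second common value $c'$. I would then observe that $c \ne c'$: otherwise $\Phi^{-1}(c)$, a multiset of size $d$, would have to equal both tuples $(x_{i})$ and $(y_{j})$, contradicting the assumed distinctness of the pair.

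Next, from the polynomial identities
\[ \kappa(z) - c\,\lambda(z) \;=\; K\, p(z) \quad \text{and} \quad \kappa(z) - c'\,\lambda(z) \;=\; K'\, q(z), \]
valid with suitable nonzero constants $K, K'$, I would solve the resulting $2 \times 2$ linear system to express $\kappa(z)$ and $\lambda(z)$ as explicit linear combinations of $p(z)$ and $q(z)$ with denominator $c' - c$ (whence the importance of the previous step). An analogous argument for $\Psi$, starting from $\mu(z) - c\,\nu(z) = L\,q(z)$ and $\mu(z) - c'\,\nu(z) = L'\,p(z)$, yields $\mu(z)$ and $\nu(z)$ as linear combinations of $p(z)$ and $q(z)$ with constants $L, L'$. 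Substituting these expressions into $\kappa(x)\nu(y) - \lambda(x)\mu(y)$, which is a nonzero scalar multiple of $f_{d}(x, y)$, and expanding, I expect the two $p(x)q(y)$ terms and the two $q(x)p(y)$ terms to cancel, leaving
\[ f_{d}(x, y) \;=\; \alpha\,\bigl[\, K'L\, q(x)\,q(y) \;-\; K L'\, p(x)\,p(y) \,\bigr] \]
for some nonzero constant $\alpha$.

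This closed form is manifestly invariant under the swap $x \leftrightarrow y$, so $f_{d}(x, y) = f_{d}(y, x)$ as polynomials. But $f_{d}(y, x) = 0$ is precisely $\kappa(y)\nu(x) - \lambda(y)\mu(x) = 0$, namely $\Phi(y) = \Psi(x)$; hence $f_{d}(x, y) = 0$ admits the swapped factorisation $\Phi(y) = \Psi(x)$ as well, which is what the lemma asserts. The one nontrivial input in this plan is the inequality $c \ne c'$ --- the remainder being routine linear algebra together with a polynomial expansion whose cancellations are dictated by the structure of the two linear systems --- and this is exactly the point at which the distinctness of the two $d$-tuples is used.
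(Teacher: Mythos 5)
Your argument takes a genuinely different route from the paper's, and its core computation is correct. The paper proves symmetry of the coefficient matrix one entry at a time: it applies Vieta's formula for the sum of roots to the two degree-$d$ equations determined by $z_{1}$ and by $z_{2}$, deduces $\kappa_{d}\nu_{d-1} - \lambda_{d}\mu_{d-1} = \kappa_{d-1}\nu_{d} - \lambda_{d-1}\mu_{d}$, leaves the remaining coefficient identities to ``analogous techniques'', and then runs three subcases to handle tuple entries or values equal to $\infty$. You instead solve the linear systems $\kappa - c\lambda = Kp$, $\kappa - c'\lambda = K'q$ and $\mu - c\nu = Lq$, $\mu - c'\nu = L'p$; the mixed terms do cancel as you predict, giving $f_{d}(x,y) = \alpha\bigl[K'L\,q(x)q(y) - KL'\,p(x)p(y)\bigr]$, which is manifestly swap-invariant, so all the coefficient symmetries are delivered simultaneously. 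In the generic (all-finite) situation this is cleaner and more informative than the paper's coefficient-by-coefficient argument.

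There are, however, two gaps to repair. The more serious one sits exactly at the step you single out, $c \ne c'$, together with the identities involving $c'$. From $f_{d}(y_{j}, x_{i}) = 0$ for all $i, j$ you only learn that every \emph{point} $y_{j}$ is a root of $\Phi(z) = c'$ and every point $x_{i}$ a root of $\Psi(z) = c'$; the multiset $\{y_{j}\}$ is by construction $\Psi^{-1}(c)$, and for $d \ge 3$ it need not coincide with $\Phi^{-1}(c')$ once multiplicities are counted, so neither your distinctness argument for $c \ne c'$ nor the identities $\kappa - c'\lambda = K'q$ and $\mu - c'\nu = L'p$ follow from the literal hypothesis. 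Concretely, with $\Phi(z) = z^{2}(z-1)$, $\Psi(z) = z(z-1)^{2}$ and $f_{3} = \Phi(x) - \Psi(y)$, the distinct tuples $(0,0,1)$ and $(0,1,1)$ satisfy $f_{3}(x_{i}, y_{j}) = 0$ and $f_{3}(y_{j}, x_{i}) = 0$ for all $i, j$ with $c = c' = 0$, yet the zero set of $f_{3}$ is not swap-symmetric (check at $x = -1$); so the pointwise condition alone cannot suffice. What your proof actually needs --- and what the paper's proof also uses silently when it writes the reversed relation as a tuple correspondence and simply asserts $z_{1} \ne z_{2}$ --- is the stronger reading that $(y_{1}, \dots, y_{d})$ and $(x_{1}, \dots, x_{d})$ form a corresponding pair in the reversed order, i.e. $\{y_{j}\} = \Phi^{-1}(c')$ and $\{x_{i}\} = \Psi^{-1}(c')$ as multisets; under that reading your distinctness argument and both identities are fine, but you must state that this is the hypothesis you are using. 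Secondly, your linear systems presuppose that $c$, $c'$ and all $x_{i}$, $y_{j}$ are finite, so that $p$ and $q$ have degree exactly $d$; the paper devotes its three subcases to the configurations involving $\infty$, and your proof needs either an analogous (easy) degenerate-case discussion or a homogeneous reformulation to cover them.
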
 

\begin{proof} 
In order to prove the Lemma, we merely prove that the coefficient of $x^{d} y^{d - 1}$ in the expression of $f_{d}$ agrees with the coefficient of $x^{d - 1} y^{d}$. Using analogous techniques, we can then prove that the coefficient of $x^{i} y^{j}$ in the expression of $f_{d}$ is the same as the coefficient of $x^{j} y^{i}$, that completes the proof. 

Since $f_{d} (x, y) = 0$ is given to be a map of $d$-tuples, we know by Theorem \eqref{Bullett,separation,for,d} that the equation can be rewritten as $\Phi (x) = \Psi (y)$, where $\Phi$ and $\Psi$ are fractional degree $d$ functions, that look like 
\[ \Phi (x)\ \ =\ \ \frac{\kappa_{d} x^{d}\, +\, \cdots\, +\, \kappa_{1} x\, +\, \kappa_{0}}{\lambda_{d} x^{d}\, +\, \cdots\, +\, \lambda_{1} x\, +\, \lambda_{0}} \quad \quad \text{and} \quad \quad \Psi (y)\ \ =\ \ \frac{\mu_{d} y^{d}\, +\, \cdots\, +\, \mu_{1} y\, +\, \mu_{0}}{\nu_{d} y^{d}\, +\, \cdots\, +\, \nu_{1} y\, +\, \nu_{0}}, \] 
with complex coefficients. 

{\bf The finite case:}  In this case, we consider all $x_{i}$'s, $y_{j}$'s and $z_{k}$'s to be finite, as explained below. Given that 
\begin{eqnarray*} 
\big( x_{1}, x_{2}, \cdots, x_{d} \big)_{\widehat{\mathbb{C}}_{x}} &\mathrel{\mathop{\rightleftarrows}^{f_{d}}} & \big( y_{1}, y_{2}, \cdots, y_{d} \big)_{\widehat{\mathbb{C}}_{y}}\ \ \ \text{and} \\ 
\big( y_{1}, y_{2}, \cdots, y_{d} \big)_{\widehat{\mathbb{C}}_{x}} & \mathrel{\mathop{\rightleftarrows}^{f_{d}}} & \big( x_{1}, x_{2}, \cdots, x_{d} \big)_{\widehat{\mathbb{C}}_{y}}, 
\end{eqnarray*} 
we consider distinct values $z_{1}$ and $z_{2}$ in $\mathbb{C}$ such that 
\[ \Phi (x_{i})\ =\ z_{1}\ =\ \Psi (y_{j})\ \ \ \ \ \ \text{and}\ \ \ \ \ \ \Phi (y_{j})\ =\ z_{2}\ =\ \Psi (x_{i})\ \ \ \ \ \forall 1 \le i, j \le d. \] 
In other words, $\{ x_{1}, \cdots, x_{d} \}$ is the solution set of the algebraic equations $\Phi (x) - z_{1} = 0$ and $\Psi (x) - z_{2} = 0$, where points are repeated according to their multiplicities. Employing the expressions that we have for $\Phi$ and $\Psi$, as written above and considering the sum of these roots from these algebraic equations, we obtain 
\[ \frac{\kappa_{d - 1} - z_{1} \lambda_{d - 1}}{\kappa_{d} - z_{1} \lambda_{d}}\ \ =\ \  \frac{\mu_{d - 1} - z_{2} \nu_{d - 1}}{\mu_{d} - z_{2} \nu_{d}}. \] 
Similarly, concentrating on the solution set, namely $\{ y_{1}, \cdots, y_{d} \}$ of the algebraic equations $\Psi (y) - z_{1} = 0$ and $\Phi (y) - z_{2} = 0$, where again the points are repeated according to their multiplicities, we obtain the sum of these roots as 
\[ \frac{\kappa_{d - 1} - z_{2} \lambda_{d - 1}}{\kappa_{d} - z_{2} \lambda_{d}}\ \ =\ \  \frac{\mu_{d - 1} - z_{1} \nu_{d - 1}}{\mu_{d} - z_{1} \nu_{d}}. \] 
From the above two equations, we get, 
\[ z_{1} \left( \lambda_{d} \mu_{d - 1} - \lambda_{d - 1} \mu_{d} \right) + z_{2} \left( \kappa_{d} \nu_{d - 1} - \kappa_{d - 1} \nu_{d} \right)\ =\ z_{1} \left( \kappa_{d} \nu_{d - 1} - \kappa_{d - 1} \nu_{d} \right) + z_{2} \left( \lambda_{d} \mu_{d - 1} - \lambda_{d - 1} \mu_{d} \right), \] 
that gives 
\[ z_{1} \left( \kappa_{d} \nu_{d - 1} - \kappa_{d - 1} \nu_{d} - \lambda_{d} \mu_{d - 1} + \lambda_{d - 1} \mu_{d} \right)\ =\ z_{2} \left( \kappa_{d} \nu_{d - 1} - \kappa_{d - 1} \nu_{d} - \lambda_{d} \mu_{d - 1} + \lambda_{d - 1} \mu_{d} \right). \] 
Since $z_{1} \ne z_{2}$, we have 
\[ \kappa_{d} \nu_{d - 1} - \lambda_{d} \mu_{d - 1}\ \ =\ \ \kappa_{d - 1} \nu_{d} - \lambda_{d - 1} \mu_{d}, \] 
which proves that the coefficients of $x^{d} y^{d - 1}$ and $x^{d - 1} y^{d}$ are equal in the equation $f_{d} (x, y) = 0$. 

{\bf The infinite case:} This has various subcases, one or several of $x_{i}$'s or $y_{j}$'s can be infinite (depending upon the multiplicity with which $\infty$ appears as a zero of the appropriate equation) or one of the $z_{k}$'s is infinite. However, note that $\infty$ can not appear in the enlisting of both $x_{i}$'s and $y_{j}$'s simultaneously. Owing to symmetric reasonings, we will only discuss $x_{i}$'s being infinite, here. 

{\bf Subcase - 1: When $x_{i}$'s and $y_{j}$'s are all finite, $z_{1} < \infty$ but $z_{2} = \infty$.} In this case, we consider the equations 
\[ \Phi (x_{i})\ =\ z_{1}\ =\ \Psi (y_{j})\ \ \ \ \ \ \text{and}\ \ \ \ \ \ \Phi (y_{j})\ =\ \infty\ =\ \Psi (x_{i})\ \ \ \ \ \forall 1 \le i, j \le d. \] 
Thus, $\{ x_{1}, \cdots, x_{d} \}$ is the solution set of the algebraic equations $\sum\limits_{i = 0}^{d} \left( \kappa_{i} - z_{1} \lambda_{i} \right) x^{i} = 0$ and $\sum\limits_{i = 0}^{d} \nu_{i} x^{i} = 0$, where the points are repeated according to their multiplicities. Focussing again on the sum of all these $x_{i}$'s yields 
\[ z_{1}\ =\ \frac{\kappa_{d} \nu_{d - 1} - \kappa_{d - 1} \nu_{d}}{\lambda_{d} \nu_{d - 1} - \lambda_{d - 1} \nu_{d}}. \] 
When we do an analogous exercise with $\{ y_{1}, \cdots, y_{d} \}$ being the solution set of the algebraic equations $\sum\limits_{i = 0}^{d} \left( \mu_{i} - z_{1} \nu_{i} \right) y^{i} = 0$ and $\sum\limits_{i = 0}^{d} \lambda_{i} y^{i} = 0$, where again the points are repeated according to their multiplicities, we obtain 
\[ z_{1}\ =\ \frac{\lambda_{d} \mu_{d - 1} - \lambda_{d - 1} \mu_{d}}{\lambda_{d} \nu_{d - 1} - \lambda_{d - 1} \nu_{d}}. \] 
Since the denominators in the expressions of $z_{1}$ are equal, their numerators should also be equal that results in 
\[ \kappa_{d} \nu_{d - 1} - \lambda_{d} \mu_{d - 1}\ \ =\ \ \kappa_{d - 1} \nu_{d} - \lambda_{d - 1} \mu_{d}. \] 

{\bf Subcase - 2: When only $(d - m)$ of the $x_{i}$'s are finite, the remainder of the $x_{i}$'s being $\infty$ while all of the $y_{j}$'s and $z_{k}$'s are finite.} Owing to $m$ many infinities in the collection of $x_{i}$'s and the equations $\Phi (x_{i}) - z_{1} = 0$ and $\Psi (x_{i}) - z_{2} = 0$, we get $z_{1} = \dfrac{\kappa_{i}}{\lambda_{i}}$ and $z_{2} = \dfrac{\mu_{i}}{\nu_{i}}$ for $i = d , d - 1, \cdots, d - m$. Using the sum of the roots of the algebraic equations $\Psi (y) - z_{1} = 0$ and $\Phi (y) - z_{2} = 0$ and plugging the values of $z_{1}$ and $z_{2}$, as obtained above, we get 
\[ \kappa_{d} \nu_{d - 1} - \lambda_{d} \mu_{d - 1}\ \ =\ \ \kappa_{d - 1} \nu_{d} - \lambda_{d - 1} \mu_{d}. \] 

{\bf Subcase - 3: When only $(d - m)$ of the $x_{i}$'s are finite, the remainder of the $x_{i}$'s being $\infty$ while all of the $y_{j}$'s are finite, $z_{1} < \infty$ and $z_{2} = \infty$.} We reach the same conclusion in this final and exhaustive case too, working analogously. 
\end{proof} 

\section{Proof of Theorems \eqref{symm}, \eqref{timepres} and \eqref{timerev}} 
\label{pfsec2} 

In this section, we prove the Theorems \eqref{symm}, \eqref{timepres} and \eqref{timerev}. 

\begin{proof}(of Theorem \eqref{symm}) 
The proof of Theorem \eqref{symm} rests entirely on Lemma \eqref{symmlemma}. It is quite obvious from the proof of Lemma \eqref{symmlemma} that the matrix of coefficients is symmetric and that the coefficients, as mentioned in the equation $\Phi (x) = \Psi (y)$ satisfy 
\[ \kappa_{i} \nu_{j} + \lambda_{j} \mu_{i}\ =\ \kappa_{j} \nu_{i} + \lambda_{i} \mu_{j},\ \ \ \ \text{for}\ \ \ 0 \le i, j \le d. \] 
\end{proof} 

\noindent 
We now prove Theorem \eqref{timepres}. 

\begin{proof}(of Theorem \eqref{timepres}) 
We start by proving $1 \Longrightarrow 3$. Since $f_{d}$ is a map of $d$-tuples, we have 
\[ \big( x_{1}, x_{2}, \cdots, x_{d} \big)_{\widehat{\mathbb{C}}_{x}}\ \ \mathrel{\mathop{\rightleftarrows}^{f_{d}}}\ \ \big( y_{1}, y_{2}, \cdots, y_{d} \big)_{\widehat{\mathbb{C}}_{y}}, \] 
as the solution set for $f_{d} (x, y) = 0$. Owing to our assumption, we further have $f_{d} (\overline{x}, \overline{y}) = 0$, {\it i.e.}, 
\[ \big( \overline{x_{1}}, \overline{x_{2}}, \cdots, \overline{x_{d}} \big)_{\widehat{\mathbb{C}}_{x}}\ \ \mathrel{\mathop{\rightleftarrows}^{f_{d}}}\ \ \big( \overline{y_{1}}, \overline{y_{2}}, \cdots, \overline{y_{d}} \big)_{\widehat{\mathbb{C}}_{y}}. \] 
Taking the complex conjugate of the latter equation, we obtain 
\[ \big( x_{1}, x_{2}, \cdots, x_{d} \big)_{\widehat{\mathbb{C}}_{x}}\ \ \mathrel{\mathop{\rightleftarrows}^{\overline{f_{d}}}}\ \ \big( y_{1}, y_{2}, \cdots, y_{d} \big)_{\widehat{\mathbb{C}}_{y}}, \] 
where $\overline{f_{d}}$ denotes the correspondence mentioned in Equation \eqref{fdnozeta}, with every coefficient $A_{i, j}$ replaced with $\overline{A_{i, j}}$. Thus, $f_{d} (x, y) = 0$ and $\overline{f_{d}} (x, y) = 0$ represent the same polynomial correspondence, meaning one is a constant multiple of the other. By comparing the coefficients, we obtain that $A_{i, j} = M \overline{A_{i, j}}$ for every $0 \le i, j \le d$, where $M \in \mathbb{C}$. Thus, using Lemma \eqref{zbyw}, we have ${\rm Re}(A_{i, j}) = \rho {\rm Im} (A_{i, j})$ for every $0 \le i, j \le d$ for some $\rho \in \mathbb{R}$. Hence, writing $A_{i, j} = \rho a_{i, j} +  i a_{i, j}$, we obtain an alternate representation of the correspondence given by 
\begin{eqnarray*} 
f_{d}(x, y) & =\ \ (\rho + i) & \big[ a_{(d,\, d)} x^{d} y^{d}\; +\; a_{(d,\, d - 1)} x^{d} y^{d - 1}\; +\; \cdots\; +\; a_{(d,\, 1)} x^{d} y\; +\; a_{(d,\, 0)} x^{d} \\ 
& & +\; a_{(d - 1,\, d)} x^{d - 1} y^{d}\; +\; \cdots\; +\; a_{(d - 1,\, 0)} x^{d - 1} \\ 
& & +\; \cdots \\ 
& & +\; a_{(0,\, d)} y^{d}\; +\; \cdots\; +\; a_{(0,\, 0)} \big],
\end{eqnarray*}
where every coefficient $a_{i, j} \in \mathbb{R}$, proving statement $(3)$. 

Since $3 \Longrightarrow 2$ is trivial, we now write a short proof of $2 \Longrightarrow 1$. Consider the correspondence $\Phi (x) - \Psi (y) = 0$. Then, conjugating the whole equation, we obtain statement $(1)$. 
\end{proof} 

We now prove Theorem \eqref{timerev}. 

\begin{proof}(of Theorem \eqref{timerev}) 
Here, we start by proving $(1) \Longrightarrow (2)$. Since $f_{d}$ is a map of $d$-tuples, by Theorem \eqref{Bullett,separation,for,d} we obtain a factorisation of $f_{d} (x, y) = 0$ as $\Phi (x) = \Psi (y)$. Then, making use of the hypothesis, we obtain a factorisation of $f_{d} (\overline{y}, \overline{x}) = 0$ as $\Phi (\overline{y}) = \Psi (\overline{x})$. Taking the complex conjugate of the last equation gives us $\overline{\Phi (\overline{y})} = \overline{\Psi (\overline{x})}$, which is alternate representation of the original correspondence $f_{d} (x, y) = 0$. Thus, this only means that there exists a M\"{o}bius map (of determinant $1$, without loss of generality) $M = \begin{pmatrix} A & B \\ C & D \end{pmatrix}$ such that $\overline{\Psi} (x) = M \circ \Phi (x)$. Here, $\overline{\Psi}$ represents the fractional degree $d$ rational map, as written in Equation \eqref{phipsi} where every coefficient in the expression of $\Psi$ is complex conjugated. Thus, 
\begin{equation} 
\label{psibarxone} 
\overline{\Psi} (x)\ \ =\ \ M \circ \Psi (y). 
\end{equation} 
Owing to our hypothesis as in statement $(1)$, we also have $\overline{\Psi} (\overline{y}) = M \circ \Psi (\overline{x})$. Taking the complex conjugate of the above equation, we obtain $\Psi (y) = \overline{M} \circ \overline{\Psi} (x)$. Thus, 
\begin{equation} 
\label{psibarxtwo} 
\overline{\Psi} (x)\ \ =\ \ \left( \overline{M} \right)^{-1} \circ \Psi (y). 
\end{equation} 
From Equations \eqref{psibarxone} and \eqref{psibarxtwo}, we obtain $M = \left( \overline{M} \right)^{-1}$, that results in the matrix entries $B$ and $C$ being purely imaginary and $A = \overline{D}$. Such a matrix $M$ can be expressed as $M = \left( \overline{N} \right)^{-1} N$ for some matrix $N$ with complex entries. Plugging in this expression for $M$ in Equation \eqref{psibarxone}, we get 
\[ N \circ \Psi (y)\ \ =\ \ \overline{N} \circ \overline{\Psi} (x), \] 
establishing $(2)$. The remainder of the proof is a simple exercise. 
\end{proof} 

\section{Examples}
\label{exsec} 

In this concluding section, we provide examples illustrating the main theorems of this paper. 
 
\begin{example} 
Consider 
\begin{displaymath} 
\begin{array}{c r c l} 
& 5 x^{5} y^{5}\; +\; 5 x^{5} y^{4}\; +\; 10 x^{5} y^{3}\; +\; 7 x^{5} y^{2}\; + (1 + 6i) x^{5} y\; +\; 11 x^{5} & & \nonumber \\ 
+ & 7 x^{4} y^{5}\; +\; 4 x^{4} y^{4}\; +\; 8 x^{4} y^{3}\; +\; 11 x^{4} y^{2}\; +\; (2 + 3i) x^{4} y\; +\; 7 x^{4} & & \nonumber \\ 
+ & (9 - i) x^{3} y^{5}\; +\; (3 - 2i) x^{3} y^{4}\; +\; (6 - 4i) x^{3} y^{3}\; +\; (15 - i) x^{3} y^{2}\; +\; 6 x^{3} y\; +\; (3 - 5i) x^{3} & & \nonumber \\ 
+ & 15 x^{2} y^{5}\; +\; 10 x^{2} y^{4}\; +\; 20 x^{2} y^{3}\; +\; 23 x^{2} y^{2}\; +\; (4 + 9i) x^{2} y\; +\; 19 x^{2} & & \nonumber \\ 
+ & 16 x y^{5}\; +\; 7 x y^{4}\; +\; 14 x y^{3}\; +\; 26 x y^{2}\; +\; (5 + 3i) x y\; +\; 10 x & & \nonumber \\ 
+ & 7 y^{5}\; +\; 9 y^{4}\; +\; 18 y^{3}\; +\; 9 y^{2}\; +\; (1 + 12i) y\; +\; 21 & = & 0. 
\end{array} 
\end{displaymath} 
This correspondence can be factorised as 
\[ \frac{x^{5}\, +\, 2 x^{4}\, +\, 3 x^{3}\, +\, 4 x^{2}\, +\, 5 x\, +\, 1}{2x^{5}\, +\, x^{4}\, -\, i x^{3}\, +\, 3 x^{2}\, +\, x\, +\, 4}\ \ =\ \ -\; \frac{y^{5}\, +\, 2 y^{4}\, +\, 4 y^{3}\, +\, y^{2}\, +\, 3i y\, +\, 5}{3 y^{5}\, +\, y^{4}\, +\, 2 y^{3}\, +\, 5 y^{2}\, +\, y\, +\, 1}. \] 
Thus, by Theorem \eqref{Bullett,separation,for,d}, this is a map of $5$-tuples. Further, the matrix of coefficients has rank $2$, {\it i.e.}, 
\[ {\rm rank} \begin{pmatrix} 
5 & 5 & 10 & 7 & 1 + 6i & 11 \\ 
7 & 4 & 8 & 11 & 2 + 3i & 7 \\ 
9 - i & 3 - 2i & 6 - 4i & 15 - i & 6 & 3 - 5i \\ 
15 & 10 & 20 & 23 & 4 + 9i & 19 \\ 
16 & 7 & 14 & 26 & 5 + 3i & 10 \\ 
7 & 9 & 18 & 9 & 1 + 12i & 21 
\end{pmatrix}\ \ =\ \ 2. \] 
\end{example} 

\begin{example} 
\begin{enumerate} 
\item[(a)] The correspondence 
\begin{eqnarray*} 
x^{3} y^{3}\; +\; 3 x^{3} y^{2}\; +\; 3 x^{3} y\; +\; x^{3}\; +\; 3 x^{2} y^{3}\; +\; 12 x^{2} y^{2}\; +\; 15 x^{2} y\; +\; 6 x^{2} & & \\ 
+ 3 x y^{3}\; +\; 15 x y^{2}\; +\; 24 x y\; +\; 12 x\; +\; y^{3}\; +\; 6 y^{2}\; +\; 12 y\; +\; 8 & = & 0, 
\end{eqnarray*} 
can be written as $\left( x y + x + y + 2 \right)^{3}\ =\ 0$, making this correspondence a map of $3$-tuples. 
\item[(b)] The correspondence 
\begin{eqnarray*} 
x^{4} y^{4}\; +\; 2 x^{4} y^{2}\; +\; x^{4}\; +\; 2 x^{3} y^{4}\; +\; 4 x^{3} y^{2}\; +\; 2 x^{3}\; +\; 3 x^{2} y^{4}\; +\; 8 x^{2} y^{2} & & \\ 
+ 5 x^{2}\; +\; 2 x y^{4}\; +\; 6 x y^{2}\; +\; 4 x\; +\; y^{4}\; +\; 4 y^{2}\; +\; 4 & = & 0, 
\end{eqnarray*} 
can be written as $\left( x^{2} y^{2} + x^{2} + x y^{2} + x + y^{2} + 2 \right)^{2}\ =\ 0$, making this correspondence a map of $4$-tuples; however with the multiplicity of each of the roots of the quadratic correspondence inside the bracket, being doubled for the quartic correspondence. 
\end{enumerate} 
\end{example} 

\begin{example} 
Consider the map of $3$-tuples given by 
\[ \frac{x^{3}\, -\, 6 x^{2}\, +\, 11 x\, -\, 6}{- x^{3}\, -\, 8 x^{2}\, +\, 31 x\, +\, 10}\ \ =\ \ \frac{2 y^{3}\, +\, 2 y^{2}\, -\, 20 y\, -\, 16}{y^{3}\, +\, 8 y^{2}\, -\, 31 y\, -\, 10}, \] 
that corresponds between the triples $(1, 2, 3)$ and $(-1, -2, 4)$ {\it via} $z_{1} = 0$ and $z_{2} = 1$, where we follow the notations for $z_{k}$'s, as in the proof of Lemma \eqref{symmlemma}. Observe that this correspondence can also be expressed as 
\begin{eqnarray*} 
3 x^{3} y^{3}\; +\; 10 x^{3} y^{2}\; -\; 51 x^{3} y\; -\; 26 x^{3}\; +\; 10 x^{2} y^{3}\; -\; 32 x^{2} y^{2}\; +\; 26 x^{2} y\; -\; 68 x^{2} & & \\ 
- 51 x y^{3}\; +\; 26 x y^{2}\; +\; 279 x y\; +\; 386 x\; -\; 26 y^{3}\; -\; 68 y^{2}\; +\; 386 y\; +\; 220 & = & 0, 
\end{eqnarray*} 
wherein the coefficient of $x^{i} y^{j}$ is the same as the coefficient of $x^{j} y^{i}$, for all $0 \le i, j \le 3$, leading to a symmetric matrix of coefficients, as required in Theorem \eqref{symm}. 
\end{example} 

\begin{example} 
Consider the map of $4$-tuples given by 
\[ \frac{2 x^{4}\, +\, 3 x^{3}\, +\, 10 x^{2}\, +\, x\, +\, 7}{x^{4}\, +\, 5 x^{3}\, +\, x^{2}\, +\, 4 x\, +\, 1}\ \ =\ \ -\; \frac{y^{4}\, +\, 2 y^{3}\, +\, 5 y^{2}\, +\, 6 y\, +\, 4}{y^{4}\, +\, 7 y^{3}\, +\, 2 y^{2}\, +\, y\, +\, 8}, \] 
where all the coefficients in the above mentioned fractional quartic representation are real. Note that the correspondence can also be written as 
\begin{displaymath} 
\begin{array}{c r c l} 
& 3 x^{4} y^{4}\; +\; 16 x^{4} y^{3}\; +\; 9 x^{4} y^{2}\; +\; 8 x^{4} y\; +\; 20 x^{4} + 8 x^{3} y^{4}\; +\; 31 x^{3} y^{3}\; +\; 31 x^{3} y^{2}\; +\; 33 x^{3} y\; +\; 44 x^{3} & & \\ 
+ & 11 x^{2} y^{4}\; +\; 72 x^{2} y^{3}\; +\; 25 x^{2} y^{2}\; +\; 16 x^{2} y\; +\; 84 x^{2} + 5 x y^{4}\; +\; 15 x y^{3}\; +\; 22 x y^{2}\; +\; 25 x y\; +\; 24 x & & \\ 
+ & 8 y^{4}\; +\; 51 y^{3}\; +\; 19 y^{2}\; +\; 13 y\; +\; 60 & = & 0, 
\end{array} 
\end{displaymath} 
where all the coefficients $A_{i, j}$ are real. Further, taking the complex conjugate in the last Equation, we observe that this correspondence also satisfies condition $(1)$ in Theorem \eqref{timepres}. 
\end{example} 

\begin{example} 
Consider the following rank $2$, skew-Hermitian matrix of coefficients of a cubic correspondence given by 
\[ \begin{pmatrix} 
28i & 6i - 25 & 19 i - 7 & - 31 \\ 
25 + 6i & 12 i & 17 + 5i & - 15 + 4i \\ 
7 + 19 i & - 17 + 5i & 12 i & - 2i - 28 \\ 
31 & 15 + 4i & 28 - 2i & - 20 i
\end{pmatrix} \] 
The correspondence $f_{3}$ formed from the above matrix of coefficients is a map of $3$-tuples, that satisfies $f_{3}(x, y) = 0 $ {\it iff} $f_{3}(\overline{y}, \overline{x}) = 0$. Then, $f_{3}(x, y) = 0$ can also be rewritten as 
\[ \frac{2i x^{3}\, +\, 3 x^{2}\, +\, (1 + i) x\, +\, 5}{7 x^{3}\, +\, (3 - 2i) x^{2}\, +\, 6 x\, +\, 2i}\ \ =\ \ \frac{- 2i y^{3}\, +\, 3 y^{2}\, +\, (1 - i) y\, +\, 5}{7 y^{3}\, +\, (3 + 2i) y^{2}\, +\, 6 y\, -\, 2i}, \] 
illustrating Theorem \eqref{timerev}. 
\end{example} 
\bigskip 

{\bf Conflict of Interests statement:} The authors, hereby declare that they have no conflict of interests. 
\bigskip 

{\bf Data Availability statement:} Apart from the references mentioned below, there is no associated data that was used for the preparation of this manuscript. 
\bigskip

\bigskip \bigskip  

\end{document}